\newtheorem{theorem}{Theorem}[section]
\newtheorem{lemma}[theorem]{Lemma}
\newtheorem{corollary}[theorem]{Corollary}
\theoremstyle{definition}
\newtheorem{definition}[theorem]{Definition}
\theoremstyle{remark}
\newtheorem{remark}[theorem]{Remark}
\numberwithin{equation}{section}
\newcommand{\R}{{\mathbb R}}
\begin{document}

\title[On the modified entropy equation]
{On the modified entropy equation}

\author[Eszter Gselmann ]{Eszter Gselmann }

\address{Institute of Mathematics, University of Debrecen, 4010 Debrecen,
P. O. Box 12, Hungary}

\email{gselmann@math.klte.hu}


\subjclass[2000]{Primary 39B40; Secondary 94A17.}

\keywords{Entropy, entropy equation, fundamental equation of information,
associativity equation.}

\date{Received: \today,  Revised:}

\begin{abstract}
The object of this paper is to solve the so--called
modified entropy equation
\[
f\left(x, y, z\right)=f\left(x, y+z, \mathbf{0}\right)+
\mu\left(y+z\right)f\left(\mathbf{0}, \frac{y}{y+z}, \frac{z}{y+z}\right),
\]
on the positive cone of $\R^{k}$, where $\mu$ is a given multiplicative
function on this cone. After that the regular solutions of this equation are determined.
Furthermore we investigate its connection between the entropy equation and other
equations, as well.
\end{abstract} \maketitle

\section{Introduction and preliminaries}
Throughout this paper we will use the following notations
\[
\R^{k}_{+}=\left\{x\in\R^{k}\vert x\geq \mathbf{0}\right\} \quad \text{and} \quad
\R^{k}_{++}=\left\{x\in\R\vert x>\mathbf{0}\right\},
\]
where $k$ is an arbitrary but fixed positive integer and $\R$ denote the set of the real numbers.
Furthermore all operations on vectors are to be done
\textbf{componentwise}, i.e., $x\geq\mathbf{0}$ and $x>\mathbf{0}$ denotes that all coordinates of the vector
$x$ are nonnegative and positive, respectively. Here $\mathbf{0}$ stands for the $k$-vector
$\left(0, \ldots, 0\right)\in\R^{k}$ and we write $\mathbf{1}$ instead of $\left(1, \ldots, 1\right)\in\R^{k}$,
and $\mathbf{\frac{1}{2}}$ instead of $\left(\frac{1}{2}, \ldots, \frac{1}{2}\right)\in\R^{k}$. \\
\noindent
In this paper the general and the regular solutions of the
equation
\begin{equation}\label{gen.ent.eq}
f\left(x, y, z\right)=f\left(x, y+z, \mathbf{0}\right)+
\mu\left(y+z\right)f\left(\mathbf{0}, \frac{y}{y+z}, \frac{z}{y+z}\right),
\end{equation}
are determined on the positive cone of $\R^{k}$ with some
$\mu:\R^{k}_{++}\rightarrow\R$ multiplicative
function. Equation (\ref{gen.ent.eq})
is called \emph{the modified entropy equation}.
In \cite{AGMS08} the symmetric solutions of the equation
\begin{equation}\label{ent.eq}
f\left(x, y, z\right)=
f\left(x, y+z, 0\right)+\left(y+z\right)f\left(0, \frac{y}{y+z}, \frac{z}{y+z}\right)
\end{equation}
are determined, which is supposed to hold for all  $x, y, z\in\R_{++}$.
However, in \cite{KM74} and also in \cite{Acz77} a
similar equation, namely the entropy equation was discussed, more precisely, under some
assumptions the solution of the equation
\begin{equation}\label{entropy.eq}
f\left(x, y, z\right)=f\left(x+y, z, 0\right)+f\left(x, y, 0\right)
\end{equation}
on the set $\left\{\left(x, y, z\right)\in\R^{3}\vert x\geq 0, y\geq 0, z\geq 0, xy+yz+zx>0\right\}$
was determined. \\
In the third part of our paper we will show that equations
(\ref{ent.eq}) and (\ref{entropy.eq}) are special cases of equation (\ref{gen.ent.eq}). \\
\noindent
In what follows we shall
list some basic facts about the theory of functional equations. These
results can be found for instance in \cite{Kuc85} or in \cite{RB87}.

\begin{definition} \cite{RB87}
Let $I\subset\R^{k}_{+}$ and
\[
\mathcal{A}=\left\{(x, y)\in\R^{2k}_{+}\vert x, y, x+y\in I\right\}.
\]
A function
$a:I\rightarrow\R$ is called \textbf{additive on $\mathcal{A}$} if
\begin{equation}\label{add}
a\left(x+y\right)=a\left(x\right)+a\left(y\right)
\end{equation}
holds for all pairs $(x, y)\in \mathcal{A}$. \\
\noindent
Consider the set
\[
\mathcal{I}=\left\{(x, y)\in\R^{2k}_{+}\vert x, y, xy \in I\right\}.
\]
We say that
$\mu:I\rightarrow\R$ is \textbf{multiplicative on $\mathcal{I}$} if the functional equation
\begin{equation}\label{mult}
\mu\left(x y\right)=\mu\left(x\right)\mu\left(y\right)
\end{equation}
is fulfilled for all $(x, y)\in\mathcal{I}$. \\
\noindent
If
\[\mathcal{L}=\left\{(x, y)\in\R^{2k}_{++}\vert x, y, xy \in I\right\}
\]
then a function
$l:I\rightarrow\R$ is called \textbf{logarithmic on $\mathcal{L}$} if it satisfies the functional equation
\begin{equation}\label{log}
l\left(x y\right)=l\left(x\right)+l\left(y\right)
\end{equation}
for all $(x, y)\in \mathcal{L}$. \\
\noindent
In case $\mathcal{P}=\mathcal{A}\cap \mathcal{I}$, a
 function $\pi:I\rightarrow\R$ which is both additive and multiplicative on $\mathcal{P}$
is called \textbf{projection}.
\end{definition}
\noindent
Concerning extensions of multiplicative and logarithmic functions
we shall use the following two theorems.

\begin{theorem}\label{ext.mult} \cite{Kuc85}
Every multiplicative function $\mu:]0, 1[^{k}\rightarrow\R$
is uniquely extendable to a multiplicative function
$\tilde{\mu}:\R^{k}_{+}\rightarrow\R$.
\end{theorem}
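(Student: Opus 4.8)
The plan is to reduce the multiplicative equation to ordinary additivity through the componentwise logarithm, after first observing that a nonzero multiplicative function on the cube is automatically \emph{positive}. First I would dispose of the trivial case: if $\mu\equiv 0$ then $\tilde\mu\equiv 0$ works, so assume $\mu$ does not vanish identically. I claim $\mu$ is then nowhere zero, and in fact strictly positive on $]0,1[^{k}$. For the first point, suppose $\mu(a)=0$ for some $a$. Writing $a=(a^{1/n})^{n}$ with componentwise roots (all factors lying in $]0,1[^{k}$) gives $\mu(a^{1/n})^{n}=\mu(a)=0$, hence $\mu(a^{1/n})=0$ for every $n$. Since $a^{1/n}\to\mathbf 1$, for any fixed $x\in\,]0,1[^{k}$ and all large $n$ we have $x\,a^{-1/n}\in\,]0,1[^{k}$, and then $\mu(x)=\mu(a^{1/n})\mu(x\,a^{-1/n})=0$; thus $\mu\equiv 0$, a contradiction. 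Positivity is now immediate: every $x\in\,]0,1[^{k}$ equals $(x^{1/2})^{2}$ with $x^{1/2}\in\,]0,1[^{k}$, so $\mu(x)=\mu(x^{1/2})^{2}\ge 0$, and being nowhere zero it must be $>0$.

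Next I would linearize. The componentwise exponential maps the open negative orthant $C={]-\infty,0[}^{k}$ bijectively onto $]0,1[^{k}$, so setting $a(u)=\log\mu(e^{u})$ turns (\ref{mult}) into the additive equation $a(u+v)=a(u)+a(v)$, valid for all $u,v\in C$ with $u+v\in C$. I then extend $a$ to all of $\R^{k}$: every $w\in\R^{k}$ can be written as $w=p-q$ with $p,q\in C$ (choose $q$ with $q_{i}<\min(0,-w_{i})$, so $p=w+q\in C$), and I put $\tilde a(w)=a(p)-a(q)$. Because sums of elements of $C$ remain in $C$, the additivity of $a$ on $C$ shows that $\tilde a$ is well defined, additive on all of $\R^{k}$, and restricts to $a$ on $C$; moreover any additive extension $b$ satisfies $b(w)=b(p)-b(q)=a(p)-a(q)$, so $\tilde a$ is the \emph{unique} additive extension. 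Pulling back, $\tilde\mu(x)=\exp(\tilde a(\log x))$ defines a positive multiplicative function on $\R^{k}_{++}$ extending $\mu$. Uniqueness on $\R^{k}_{++}$ follows by the same route: any multiplicative extension is nowhere zero (its value at $\mathbf 1$ is idempotent, hence $1$) and positive (again because every positive vector is a square), so taking logarithms reduces its uniqueness to that of $\tilde a$.

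The remaining, and in my view most delicate, point is the passage from the open cone $\R^{k}_{++}$ to the closed cone $\R^{k}_{+}$, i.e.\ the assignment of values on the faces where some coordinates vanish. For a boundary point $x$ one exploits $\tilde\mu(x)=\tilde\mu(x)\tilde\mu(y)$ with a suitable $y\in\R^{k}_{++}$ to force the value: as soon as $\tilde\mu$ is not constantly $1$ this pins $\tilde\mu(x)=0$ on every proper face (and $\tilde\mu(\mathbf 0)=0$), which yields existence. I expect the genuine subtlety to sit exactly here, since the idempotent relation $\tilde\mu(\chi_{S})=\tilde\mu(\chi_{S})^{2}$ only constrains the boundary values to $\{0,1\}$; thus uniqueness on the \emph{closed} cone relies on the non-degeneracy coming from the nonzero case and should be read with that understanding, whereas the interior extension constructed above is unconditionally unique.
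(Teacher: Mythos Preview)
The paper does not prove this theorem; it is quoted without proof from Kuczma's monograph, so there is no in-paper argument to compare against. Your route---first showing that a nonzero multiplicative $\mu$ is strictly positive on $]0,1[^{k}$, then linearizing via the componentwise logarithm and extending the resulting additive function from the open negative orthant to all of $\R^{k}$ by differences---is exactly the standard proof, and your treatment of existence and uniqueness of the extension to the open cone $\R^{k}_{++}$ is correct.

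Your misgivings about the passage from $\R^{k}_{++}$ to $\R^{k}_{+}$ are justified and locate a genuine imprecision in the statement as the paper records it. The assertion ``$\tilde\mu$ not constantly $1$ forces $\tilde\mu=0$ on every proper face'' is slightly too coarse: from $xy=x$ with $y_{j}=1$ for $j\neq i$ and $y_{i}$ free one only gets that \emph{if $\tilde\mu$ depends nontrivially on the $i$-th coordinate} then $\tilde\mu$ must vanish on the face $\{x_{i}=0\}$, and this has to be argued coordinate by coordinate. When $\tilde\mu$ is independent of some coordinate---already for $k=1$ with $\mu\equiv 1$---both $\tilde\mu(0)=0$ and $\tilde\mu(0)=1$ yield multiplicative extensions to $\R_{+}$, so uniqueness on the closed cone genuinely fails without an added convention. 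In the paper's own applications (see the proof of Lemma~\ref{lem.mult} and the proof of Theorem~\ref{main}) only the extension to $\R^{k}_{++}$ is actually used, and there your argument is complete; the statement should be read either with target $\R^{k}_{++}$, or with the tacit convention that $\tilde\mu$ is set to $0$ on the proper faces in the nontrivial case.
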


\begin{theorem}\label{ext.log}\cite{Kuc85}
Every logarithmic function $l:]0, 1[^{k}\rightarrow\R$
is uni\-quely extendable to a logarithmic function
$\tilde{l}:\R^{k}_{++}\rightarrow\R$.
\end{theorem}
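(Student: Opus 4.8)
The plan is to reduce the logarithmic structure to ordinary additivity by passing through the (componentwise) exponential map, and then to invoke the elementary fact that an additive function defined only on the positive cone extends uniquely to the whole space. First I would introduce the bijection $\phi:\R^{k}\to\R^{k}_{++}$ given componentwise by $\phi(s)=(e^{-s_{1}},\ldots,e^{-s_{k}})$. Since $e^{-s_{i}}$ decreases from $1$ to $0$ as $s_{i}$ runs over $(0,\infty)$, the map $\phi$ carries the open cone $\R^{k}_{++}=\{s:s>\mathbf 0\}$ bijectively onto $]0,1[^{k}$ and all of $\R^{k}$ bijectively onto $\R^{k}_{++}$; moreover $\phi(s+t)=\phi(s)\phi(t)$ for the componentwise product. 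Thus $\phi$ translates the defining condition $x,y,xy\in\,]0,1[^{k}$ of logarithmicity into the condition $s,t,s+t\in\R^{k}_{++}$.

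Given a logarithmic $l:]0,1[^{k}\to\R$, I would set $A:=l\circ\phi$ on the cone $\R^{k}_{++}$. The identity $\phi(s+t)=\phi(s)\phi(t)$ together with \eqref{log} shows immediately that $A(s+t)=A(s)+A(t)$ whenever $s,t,s+t\in\R^{k}_{++}$, i.e. $A$ is additive on the positive cone. The key step is then to extend $A$ to an additive $\tilde A:\R^{k}\to\R$. For this I would use that every $u\in\R^{k}$ can be written as $u=s-t$ with $s,t\in\R^{k}_{++}$ (choose $t$ with all coordinates large enough that $u+t>\mathbf 0$, and put $s=u+t$), and define $\tilde A(u)=A(s)-A(t)$.

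Well-definedness is where the real argument lies: if $s-t=s'-t'$ with $s,t,s',t'\in\R^{k}_{++}$, then $s+t'=s'+t$ with both sides in the cone, so additivity of $A$ on the cone gives $A(s)+A(t')=A(s')+A(t)$, whence $A(s)-A(t)=A(s')-A(t')$. A short computation then shows $\tilde A$ is additive on all of $\R^{k}$ and restricts to $A$ on the cone. Finally I would push the result back through $\phi$, defining $\tilde l:=\tilde A\circ\phi^{-1}$ on $\R^{k}_{++}$, where $\phi^{-1}(x)=(-\log x_{1},\ldots,-\log x_{k})$; reversing the first computation shows $\tilde l$ is logarithmic on $\R^{k}_{++}$ and agrees with $l$ on $]0,1[^{k}$.

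Uniqueness is the easiest part and I would treat it last: any logarithmic extension of $l$ yields, via $\phi$, an additive extension of $A$, and such an extension is forced to equal $\tilde A$ because the representation $u=s-t$ together with additivity determines its value from the values on the cone, where it already coincides with $A$. Hence $\tilde l$ is unique. The only genuinely delicate point in the whole argument is the well-definedness of $\tilde A$, and even that reduces to the cancellation $s+t'=s'+t$ inside the cone; everything else is bookkeeping through the exponential substitution.
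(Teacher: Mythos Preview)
Your argument is correct. The exponential substitution $\phi(s)=(e^{-s_{1}},\ldots,e^{-s_{k}})$ converts the logarithmic equation on $]0,1[^{k}$ into Cauchy additivity on the open cone $\R^{k}_{++}$, the extension of an additive function from the cone to all of $\R^{k}$ via differences $u=s-t$ is well known (and your well-definedness check, using $s+t'=s'+t\in\R^{k}_{++}$ together with cone-additivity, is exactly the right verification), and pulling back through $\phi^{-1}$ gives a logarithmic $\tilde l$ on $\R^{k}_{++}$ that restricts to $l$. Uniqueness follows because any logarithmic extension corresponds to an additive extension of $A$, and additive extensions from the cone are determined by the difference representation.

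There is nothing to compare against in the paper itself: Theorem~\ref{ext.log} is quoted from \cite{Kuc85} and carries no proof here. Your approach is essentially the standard one (reduce multiplicative/logarithmic structure to additive structure via the exponential, then use the cone-to-group extension for additive functions); this is in the same spirit as the treatment in \cite{Kuc85}, where extension results for multiplicative and logarithmic functions are routinely derived from the corresponding additive extension theorems. The only cosmetic point is that in your ``short computation'' for the additivity of $\tilde A$ on all of $\R^{k}$ one should make explicit that if $u=s_{1}-t_{1}$ and $v=s_{2}-t_{2}$ then $u+v=(s_{1}+s_{2})-(t_{1}+t_{2})$ with $s_{1}+s_{2},\,t_{1}+t_{2}\in\R^{k}_{++}$, so cone-additivity applies to both the $s$- and $t$-parts; you clearly have this in mind, and it completes the argument.
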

\noindent
Due to Theorems \ref{ext.mult}. and \ref{ext.log}. multiplicative and logarithmic functions
occurring in the subsequent section carry such global meaning. \\
\noindent
During the proof of our main result we will use the following lemmas.
\begin{lemma}\label{lem.mult}
If a function $\mu: ]0, 1[^{k}\rightarrow\R$ is a multiplicative function
and
\[
\mu\left(x\right)=\mu\left(\mathbf{1}-x\right)
\]
holds for all $x\in ]0, 1[^{k}$, then $\mu\equiv 1$ or
$\mu\equiv 0$ on $]0, 1[^{n}$.
\end{lemma}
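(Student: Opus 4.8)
The plan is to push everything up to the global multiplicative extension provided by Theorem~\ref{ext.mult}, and then to exploit the symmetry relation through a single factorization identity rather than through any delicate computation.

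First I would invoke Theorem~\ref{ext.mult} to extend $\mu$ to a multiplicative function $\tilde{\mu}:\R^{k}_{+}\rightarrow\R$ satisfying $\tilde{\mu}(xy)=\tilde{\mu}(x)\tilde{\mu}(y)$ for all $x,y\in\R^{k}_{+}$ (componentwise product), with $\tilde{\mu}\vert_{]0,1[^{k}}=\mu$. The next step is a standard dichotomy. If $\tilde{\mu}(x_{0})=0$ for some $x_{0}\in\R^{k}_{++}$, then writing an arbitrary $y\in\R^{k}_{++}$ as $y=x_{0}\cdot\left(y/x_{0}\right)$ with $y/x_{0}\in\R^{k}_{++}$ gives $\tilde{\mu}(y)=\tilde{\mu}(x_{0})\tilde{\mu}\!\left(y/x_{0}\right)=0$; hence $\tilde{\mu}$ vanishes identically on the open cone, and in particular $\mu\equiv 0$ on $]0,1[^{k}$. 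So from now on I may assume that $\tilde{\mu}$ is nowhere zero on $\R^{k}_{++}$ and aim to prove $\mu\equiv 1$.

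In this nonvanishing case the key is the componentwise factorization
\[
\mathbf{1}-x=\frac{\mathbf{1}-x}{x}\cdot x,\qquad x\in\,]0,1[^{k},
\]
which is meaningful because both factors lie in $\R^{k}_{++}$. Applying the global multiplicativity of $\tilde{\mu}$ together with the hypothesis $\mu(x)=\mu(\mathbf{1}-x)$, that is $\tilde{\mu}(\mathbf{1}-x)=\tilde{\mu}(x)$, I obtain
\[
\tilde{\mu}(x)=\tilde{\mu}\!\left(\frac{\mathbf{1}-x}{x}\right)\tilde{\mu}(x).
\]
Since $\tilde{\mu}(x)\neq 0$ for every $x\in\,]0,1[^{k}\subset\R^{k}_{++}$, this forces $\tilde{\mu}\!\left(\tfrac{\mathbf{1}-x}{x}\right)=1$ for all $x\in\,]0,1[^{k}$.

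Finally I would observe that the map $x\mapsto\frac{\mathbf{1}-x}{x}$ is a bijection of $]0,1[^{k}$ onto $\R^{k}_{++}$, its inverse sending $s$ to the vector with coordinates $1/(1+s_{i})$. Hence the previous identity asserts precisely that $\tilde{\mu}\equiv 1$ on all of $\R^{k}_{++}$, and restricting back yields $\mu\equiv 1$ on $]0,1[^{k}$, completing the dichotomy. The main obstacle here is conceptual rather than computational: the vector $\frac{\mathbf{1}-x}{x}$ typically has coordinates exceeding $1$, so it leaves the original domain $]0,1[^{k}$, and the argument only closes because Theorem~\ref{ext.mult} supplies a genuinely global multiplicative extension on which this factorization can be evaluated.
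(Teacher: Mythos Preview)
Your proof is correct and follows essentially the same strategy as the paper's: extend $\mu$ via Theorem~\ref{ext.mult}, then exploit global multiplicativity to factor the symmetry relation and obtain the zero/constant dichotomy. The only cosmetic difference is that the paper substitutes $x=\dfrac{p}{p+q}$ to reach $\mu(p)\,\mu\!\left(\dfrac{\mathbf{1}}{p+q}\right)=\mu(q)\,\mu\!\left(\dfrac{\mathbf{1}}{p+q}\right)$ and concludes $\mu$ is constant, whereas you factor $\mathbf{1}-x=\dfrac{\mathbf{1}-x}{x}\cdot x$ and invoke the surjectivity of $x\mapsto\dfrac{\mathbf{1}-x}{x}$ onto $\R^{k}_{++}$; the two computations are equivalent and equally short.
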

\begin{proof}
Due to Theorem \ref{ext.mult}. we obtain that the function $\mu$ is
uniquely extendable to $\R^{k}_{+}$. This extension is also denoted by $\mu$.
Thus
\[
\mu\left(x\right)=\mu\left(\mathbf{1}-x\right) \qquad \left(x\in \left. ]0,1[\right.^{k}\right)
\]
and
\[
\mu\left(xy\right)=\mu\left(x\right)\mu\left(y\right) \qquad \left(x, y\in\R^{k}_{+}\right)
\]

Let $p, q\in \R^{k}_{++}$ be arbitrary and substitute $x=\frac{p}{p+q}$ into the first equation, then using the fact that $\mu$ is multiplicative on $\R^{k}_{+}$, we get that
\[
\mu\left(\frac{p}{p+q}\right)=\mu\left(\frac{q}{p+q}\right),
\]
i.e.,
\[
\mu\left(p\right)\mu\left(\frac{\mathbf{1}}{p+q}\right)=
\mu\left(p\right)\mu\left(\frac{\mathbf{1}}{p+q}\right)
\]
holds for all $p, q\in \R^{k}_{+}$. This implies that either
\[
\mu\left(\frac{\mathbf{1}}{p+q}\right)=0,
\]
this means that $\mu\equiv 0$ (since in case $\mu$ vanishes at a point, then $\mu\equiv 0$), or
\[
\mu\left(p\right)=\mu\left(q\right)
\]
holds for all $p, q\in\R^{k}_{+}$, so $\mu$ has to be constant. Although using again the fact that $\mu$
is multiplicative, this constant has to be $1$.
\end{proof}

\begin{lemma}\label{lem.log}
If a function $l: ]0, 1[^{k}\rightarrow\R$ is a logarithmic function
and
\[
l\left(x\right)=l\left(\mathbf{1}-x\right)
\]
holds for all $x\in ]0, 1[^{k}$, then $l\equiv 0$ on $]0, 1[^{n}$.
\end{lemma}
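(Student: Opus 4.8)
The plan is to mirror the argument used for Lemma \ref{lem.mult}, taking advantage of the fact that the cancellation in the additive setting is cleaner than in the multiplicative one. First I would invoke Theorem \ref{ext.log} to extend $l$ uniquely to a logarithmic function on all of $\R^{k}_{++}$, still denoted by $l$, so that
\[
l\left(xy\right)=l\left(x\right)+l\left(y\right) \qquad \left(x, y\in\R^{k}_{++}\right)
\]
holds globally, while the symmetry $l\left(x\right)=l\left(\mathbf{1}-x\right)$ continues to hold for all $x\in ]0,1[^{k}$.

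Next, for arbitrary $p, q\in\R^{k}_{++}$ I would substitute $x=\frac{p}{p+q}\in ]0,1[^{k}$ into the symmetry relation, observing that then $\mathbf{1}-x=\frac{q}{p+q}$, which yields
\[
l\left(\frac{p}{p+q}\right)=l\left(\frac{q}{p+q}\right).
\]
Writing $\frac{p}{p+q}=p\cdot\frac{\mathbf{1}}{p+q}$ and $\frac{q}{p+q}=q\cdot\frac{\mathbf{1}}{p+q}$ and applying the logarithmic property to each side transforms this into
\[
l\left(p\right)+l\left(\frac{\mathbf{1}}{p+q}\right)=l\left(q\right)+l\left(\frac{\mathbf{1}}{p+q}\right).
\]
Since the term $l\left(\frac{\mathbf{1}}{p+q}\right)$ occurs on both sides, it cancels additively, leaving $l\left(p\right)=l\left(q\right)$ for all $p, q\in\R^{k}_{++}$; hence $l$ is constant on $\R^{k}_{++}$.

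Finally I would pin down the value of this constant, which is where the logarithmic nature is invoked a second time: evaluating $l\left(\mathbf{1}\right)=l\left(\mathbf{1}\cdot\mathbf{1}\right)=2\,l\left(\mathbf{1}\right)$ forces $l\left(\mathbf{1}\right)=0$, so the constant value must be $0$. Restricting back to $]0,1[^{k}$ then gives $l\equiv 0$, as claimed. The argument is almost entirely routine, and the only point distinguishing it from Lemma \ref{lem.mult} is that no case distinction arises: there the common factor $\mu\left(\frac{\mathbf{1}}{p+q}\right)$ had to be compared with zero, producing the alternative $\mu\equiv 0$, whereas in the present additive setting this factor simply cancels, so $l\equiv 0$ is the unique possibility. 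Accordingly, the only step requiring any care is remembering to verify that a constant logarithmic function must vanish.
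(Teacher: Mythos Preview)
Your proof is correct and follows essentially the same approach as the paper: extend $l$ via Theorem \ref{ext.log}, substitute $x=\frac{p}{p+q}$, expand using the logarithmic identity, cancel the common summand $l\!\left(\frac{\mathbf{1}}{p+q}\right)$ to conclude that $l$ is constant, and then observe that a constant logarithmic function vanishes. The only cosmetic difference is that you spell out the verification $l(\mathbf{1})=2\,l(\mathbf{1})$ explicitly, which the paper leaves implicit.
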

\begin{proof}
Using Theorem \ref{ext.log}. we get immediately, that $l$ is uniquely extendable to $\R^{k}_{++}$.
We denote this extension also by $l$. Hence we have
\[
l\left(x\right)=l\left(\mathbf{1}-x\right) \qquad  \left(x\in \left.]0,1[\right.^{k}\right)
\]

and
\[
l\left(xy\right)=\left(x\right)+l\left(y\right). \qquad \left( x, y\in\R^{k}_{++}\right)
\]
Let $p, q\in\R^{k}_{++}$ be arbitrary, after substituting $x=\frac{p}{p+q}$
into the first equation,
then using that $l$ is logarithmic, we obtain that
\[
l\left(p\right)+l\left(\frac{\mathbf{1}}{p+q}\right)=l\left(q\right)+l\left(\frac{\mathbf{1}}{p+q}\right)
\]
holds for all $p, q\in\R^{k}_{++}$,
i.e.,
\[
l\left(p\right)=l\left(q\right)
\]
for all $p, q\in\R^{k}_{++}$. Hence the function $l$ is constant. However, using that
$l$ is logarithmic, we get that this constant has to be $0$, which had to be proved.
\end{proof}

\begin{theorem}\cite{Kuc85}
Let $\mu:\R^{k}_{+}\rightarrow\R$ be a multiplicative function, then
\[
\mu\left(x\right)=\mu\left(x_{1}, \ldots, x_{k}\right)=
\prod^{k}_{i=1}\mu_{i}\left(x_{i}\right)
\]
for all $x\in\R^{k}_{+}$, where each $\mu_{i}:\R_{+}\rightarrow\R$ is multiplicative, $i=1, \ldots, k$.
\end{theorem}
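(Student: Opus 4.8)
The plan is to recover each one–dimensional factor $\mu_i$ by freezing all but the $i$-th coordinate of $\mu$ at the multiplicative unit $\mathbf{1}$, and then to reconstruct $\mu$ from these restrictions, exploiting the fact that every vector of $\R^{k}_{+}$ factors as a componentwise product of such ``axis'' vectors. First I would note that when $I=\R^{k}_{+}$ the set $\mathcal{I}$ of the definition is all of $\R^{2k}_{+}$, since $\R^{k}_{+}$ is closed under componentwise multiplication; hence $\mu(xy)=\mu(x)\mu(y)$ is available for every pair $x,y\in\R^{k}_{+}$ with no domain restriction. For each $i\in\{1,\ldots,k\}$ and each $t\in\R_{+}$ I would introduce the vector $\mathbf{e}_{i}(t)\in\R^{k}_{+}$ whose $i$-th coordinate equals $t$ and all of whose remaining coordinates equal $1$, and define $\mu_{i}(t)=\mu\bigl(\mathbf{e}_{i}(t)\bigr)$.

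Next I would check that each $\mu_{i}$ is multiplicative on $\R_{+}$. Because multiplication is componentwise and the off-axis coordinates are all $1$, one has $\mathbf{e}_{i}(s)\,\mathbf{e}_{i}(t)=\mathbf{e}_{i}(st)$; applying $\mu$ and using its multiplicativity gives $\mu_{i}(st)=\mu_{i}(s)\mu_{i}(t)$. The structural identity driving the reconstruction is $x=\mathbf{e}_{1}(x_{1})\,\mathbf{e}_{2}(x_{2})\cdots\mathbf{e}_{k}(x_{k})$, which holds componentwise precisely because $\mathbf{1}$ is the neutral element for componentwise multiplication: in coordinate $j$ exactly one factor contributes $x_{j}$ while every other factor contributes $1$. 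I would then apply the multiplicativity of $\mu$ repeatedly — a short induction on the number of factors, each partial product again lying in $\R^{k}_{+}$ so that the relevant pairs never leave $\mathcal{I}$ — to conclude $\mu(x)=\prod_{i=1}^{k}\mu\bigl(\mathbf{e}_{i}(x_{i})\bigr)=\prod_{i=1}^{k}\mu_{i}(x_{i})$, which is the assertion.

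I expect the argument to be essentially mechanical, with no appeal to any regularity or extension result. The only points that deserve care are confirming that $\mathbf{1}$, and not $\mathbf{0}$, is the identity of the (multiplicative) operation, so that the factorization $x=\prod_{i}\mathbf{e}_{i}(x_{i})$ is literally correct, and justifying the passage from two-factor multiplicativity to the $k$-fold product by induction while verifying that each intermediate product remains in $\R^{k}_{+}$. The degenerate possibility $\mu(\mathbf{1})=0$, which forces $\mu\equiv 0$, needs no separate treatment: the same construction then yields $\mu_{i}\equiv 0$ for every $i$, and the product formula continues to hold.
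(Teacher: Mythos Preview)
Your argument is correct and is the standard decomposition: freeze all but one coordinate at $\mathbf{1}$, read off the one-variable factors, and rebuild $\mu$ from the componentwise factorization $x=\prod_{i}\mathbf{e}_{i}(x_{i})$. Note, however, that the paper does not supply its own proof of this statement; it is quoted as a preliminary result from \cite{Kuc85}, so there is no in-paper argument to compare against---your proof is exactly the classical one found there.
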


\begin{theorem}\cite{Kuc85}
Let $l:\R^{k}_{++}\rightarrow\R$ be a logarithmic function, then
\[
l\left(x\right)=l\left(x_{1}, \ldots, x_{k}\right)=\sum^{k}_{i=1}l_{i}\left(x_{i}\right)
\]
for all $x\in\R^{k}_{++}$, where each $l_{i}:\R_{++}\rightarrow\R$ is logarithmic.
\end{theorem}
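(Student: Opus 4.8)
The plan is to exploit the fact that every vector in $\R^{k}_{++}$ factors, componentwise, into $k$ vectors each of which differs from $\mathbf{1}$ in a single coordinate, and then to feed this factorization into the logarithmic identity. Concretely, for each index $i\in\{1,\ldots,k\}$ and each $t\in\R_{++}$ I would write $e_{i}(t)$ for the vector whose $i$-th coordinate equals $t$ and whose remaining coordinates all equal $1$. Since every coordinate of $e_{i}(t)$ is strictly positive, we have $e_{i}(t)\in\R^{k}_{++}$, so $l$ is defined on all such vectors and its logarithmic equation may be applied freely. The first step is the purely algebraic observation that, for $x=(x_{1},\ldots,x_{k})\in\R^{k}_{++}$,
\[
x=e_{1}(x_{1})\,e_{2}(x_{2})\cdots e_{k}(x_{k}),
\]
where the product is taken componentwise: in the $j$-th slot exactly one factor contributes $x_{j}$ while all the others contribute $1$.

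Second, I would apply the defining identity $l(uv)=l(u)+l(v)$ repeatedly to this product. Because $l$ is logarithmic on the whole cone $\R^{k}_{++}$, a short induction on the number of factors yields
\[
l(x)=\sum_{i=1}^{k}l\bigl(e_{i}(x_{i})\bigr).
\]
Setting $l_{i}(t):=l\bigl(e_{i}(t)\bigr)$ for $t\in\R_{++}$ then produces the claimed representation $l(x)=\sum_{i=1}^{k}l_{i}(x_{i})$.

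Finally I must verify that each $l_{i}$ is itself logarithmic on $\R_{++}$. This rests on the identity $e_{i}(st)=e_{i}(s)\,e_{i}(t)$, which is immediate from the componentwise product, the $i$-th coordinate giving $st$ and every other coordinate giving $1\cdot 1=1$. Hence
\[
l_{i}(st)=l\bigl(e_{i}(st)\bigr)=l\bigl(e_{i}(s)\,e_{i}(t)\bigr)=l\bigl(e_{i}(s)\bigr)+l\bigl(e_{i}(t)\bigr)=l_{i}(s)+l_{i}(t),
\]
so $l_{i}$ satisfies the one-variable logarithmic equation, as required.

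There is no serious obstacle here: the argument is essentially bookkeeping of the coordinatewise factorization. The only point deserving genuine care is that all auxiliary vectors $e_{i}(t)$ remain inside the domain $\R^{k}_{++}$ — which they do precisely because the inactive coordinates are set to $1$ rather than $0$ — so that every invocation of the logarithmic equation is legitimate. The companion statement for a multiplicative $\mu$ on $\R^{k}_{+}$ is proved in exactly the same manner, with sums replaced by products and with the value $0$ now admissible in a coordinate.
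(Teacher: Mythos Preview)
Your argument is correct and is the standard proof of this decomposition: factor $x\in\R^{k}_{++}$ componentwise as $\prod_{i}e_{i}(x_{i})$, apply the logarithmic identity inductively, and check that each $l_{i}(t)=l(e_{i}(t))$ is logarithmic in one variable. The paper itself does not prove this statement; it merely quotes it from \cite{Kuc85} as a preliminary fact, so there is no in-paper proof to compare against, but your approach is precisely the one found in Kuczma's book.
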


\begin{theorem}\cite{Kuc85}
Let $\mu:\R_{+}\rightarrow\R$ be a continuous multiplicative function. Then, either there exist an
$\alpha\in\R$ such that
\[
\mu\left(x\right)=x^{\alpha}
\]
holds for all $x\in\R_{+}$ or $\mu\equiv 0$.
\end{theorem}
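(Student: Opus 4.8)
The plan is to reduce this classical statement to the additive Cauchy equation via logarithmic and exponential substitutions, and then bring in the continuity hypothesis. First I would dispose of the degenerate case. Since $\mu(1)=\mu(1\cdot 1)=\mu(1)^{2}$, we have $\mu(1)=0$ or $\mu(1)=1$. If $\mu(1)=0$, then $\mu(x)=\mu(x\cdot 1)=\mu(x)\mu(1)=0$ for every $x\in\R_{+}$, so $\mu\equiv 0$. Hence from now on I assume $\mu\not\equiv 0$, which forces $\mu(1)=1$.

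Next I would establish positivity on $\R_{++}$. For every $x\in\R_{++}$ one has $\mu(x)=\mu(\sqrt{x}\cdot\sqrt{x})=\mu(\sqrt{x})^{2}\geq 0$, so $\mu$ is nonnegative there. Moreover $\mu$ cannot vanish at any point $a\in\R_{++}$: if $\mu(a)=0$, then $\mu(at)=\mu(a)\mu(t)=0$ for all $t\in\R_{++}$, and since $a$ times $\R_{++}$ is all of $\R_{++}$, this would give $\mu\equiv 0$ on $\R_{++}$, contradicting $\mu(1)=1$. Therefore $\mu>0$ on $\R_{++}$, and $\log\mu$ is well defined and continuous there.

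Now comes the main reduction. Define $\varphi:\R\rightarrow\R$ by $\varphi(u)=\log\mu(e^{u})$. For all $u,v\in\R$,
\[
\varphi(u+v)=\log\mu\bigl(e^{u}e^{v}\bigr)=\log\bigl(\mu(e^{u})\mu(e^{v})\bigr)=\varphi(u)+\varphi(v),
\]
so $\varphi$ satisfies the additive Cauchy equation, and it is continuous because $\mu$, $\exp$ and $\log$ all are. The hard part—really the crux of the whole argument—is the classical fact that a \emph{continuous} additive function on $\R$ is linear. I would either cite this directly or sketch it: additivity yields $\varphi(q)=q\,\varphi(1)$ for every rational $q$, and continuity then propagates this identity to all real arguments. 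Consequently there exists $\alpha\in\R$ with $\varphi(u)=\alpha u$.

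Finally I would transform back and treat the boundary point. Writing $x=e^{u}$ gives $\log\mu(x)=\alpha\log x$, that is, $\mu(x)=x^{\alpha}$ for every $x\in\R_{++}$. To extend the formula to $x=0$, continuity of $\mu$ at the origin forces $\mu(0)=\lim_{x\to 0^{+}}x^{\alpha}$; since $\mu(0)$ is finite this requires $\alpha\geq 0$, and then $\mu(0)=0^{\alpha}$ under the usual convention, completing the identity $\mu(x)=x^{\alpha}$ on all of $\R_{+}$. I expect the only genuine subtlety to be this behaviour at the origin, everything else being routine once the reduction to Cauchy's equation and the continuous-solution theorem are in place.
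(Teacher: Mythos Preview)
The paper does not supply its own proof of this theorem; it is quoted as a known result from Kuczma \cite{Kuc85} and left unproved, so there is no argument in the paper to compare against. Your proof is correct and is precisely the classical route one finds in the cited reference: dispose of the zero case via $\mu(1)$, show strict positivity on $\R_{++}$ via the square-root trick, pass to an additive Cauchy equation through $\varphi(u)=\log\mu(e^{u})$, invoke continuity to get linearity, and finally read off $\mu(x)=x^{\alpha}$. Your handling of the boundary point $x=0$ is also fine; one could alternatively note that multiplicativity alone already forces $\mu(0)\in\{0,1\}$ (from $\mu(0)=\mu(0)\mu(x)$), after which continuity merely rules out $\alpha<0$.
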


\begin{theorem}\cite{Kuc85}
Let $l:\R_{++}\rightarrow\R$ be a continuous logarithmic function. Then, either
\[
l\left(x\right)=\log\left(x\right) \quad
\left(x\in\R_{++}\right)
\]
with arbitrary basis for the logarithm, or $l$ is identically zero.
\end{theorem}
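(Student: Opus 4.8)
The plan is to reduce the statement to the preceding theorem on continuous multiplicative functions, which is already at our disposal, by exponentiating. First I would introduce the auxiliary function $m\colon\R_{++}\to\R$ defined by $m(x)=\exp\left(l(x)\right)$. Since $l$ is continuous and $\exp$ is continuous, $m$ is continuous; and for all $x,y\in\R_{++}$ the logarithmic equation gives
\[
m(xy)=\exp\left(l(xy)\right)=\exp\left(l(x)+l(y)\right)=\exp\left(l(x)\right)\exp\left(l(y)\right)=m(x)m(y),
\]
so $m$ is a continuous multiplicative function on $\R_{++}$.

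The key observation is that $m$ takes only positive values, being a composition with $\exp$; in particular $m$ does not vanish at any point, so the alternative $m\equiv 0$ in the multiplicative theorem is excluded. Applying that theorem---whose conclusion is unchanged for continuous multiplicative functions on the open half-line $\R_{++}$---therefore yields an $\alpha\in\R$ with $m(x)=x^{\alpha}$ for every $x\in\R_{++}$. Since $\exp$ is injective, taking logarithms in $\exp\left(l(x)\right)=x^{\alpha}=\exp\left(\alpha\log x\right)$ gives $l(x)=\alpha\log x$ for all $x\in\R_{++}$.

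It remains only to separate the two cases. If $\alpha=0$ then $l\equiv 0$, which is one of the asserted conclusions. If $\alpha\neq 0$ then $l(x)=\alpha\log x$ is a logarithm with base $e^{1/\alpha}$, i.e.\ a logarithm to an arbitrary admissible basis, which is the other conclusion. I do not expect a serious obstacle here: the only point requiring care is the passage back through $\exp$, where one must use both its injectivity (to recover $l$ uniquely from $m$) and its strict positivity (to discard the trivial multiplicative solution). As an alternative route avoiding the multiplicative theorem, one could instead set $g(t)=l\left(e^{t}\right)$ and check that $g\colon\R\to\R$ is a continuous solution of Cauchy's additive equation $g(s+t)=g(s)+g(t)$; the classical fact that continuous additive functions are linear then gives $g(t)=\alpha t$, and substituting $t=\log x$ again yields $l(x)=\alpha\log x$.
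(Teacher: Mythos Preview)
Your argument is correct. Note, however, that the paper does not supply its own proof of this statement: it is quoted as a known result from Kuczma's monograph \cite{Kuc85}, so there is nothing in the paper to compare your approach against. What you have written is one of the standard derivations: exponentiating a continuous logarithmic function yields a continuous, nowhere-vanishing multiplicative function, whence $m(x)=x^{\alpha}$ and $l(x)=\alpha\log x$. Your remark that the multiplicative classification, stated in the paper for $\R_{+}$, carries over unchanged to $\R_{++}$ is accurate and worth making explicit. The alternative route you sketch---setting $g(t)=l(e^{t})$ and invoking the continuous Cauchy equation---is equally standard and arguably more direct, since it avoids the detour through the multiplicative theorem altogether.
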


Now we present two additional results, namely the general solution of the fundamental equation of information of multiplicative type
(see \cite{ESS98} or \cite{AD75}) and the solution of a special associativity equation (see \cite{Mak00}).
\begin{theorem}\label{FEIM}\cite{ESS98}
Let $\mu:]0,1[^{k}\rightarrow\R$ be multiplicative. Then the general solution of the equation
\begin{equation}
h\left(x\right)+\mu\left(\mathbf{1}-x\right)h\left(\frac{y}{\mathbf{1}-x}\right)=
h\left(y\right)+\mu\left(\mathbf{1}-y\right)h\left(\frac{x}{\mathbf{1}-y}\right)
\end{equation}
on the set $\{(x, y)\in\R^{2k}_{++}\vert x, y, x+y\in ]0,1[^{k}\}$ is given by
\begin{equation}
h\left(x\right)=\mu\left(\mathbf{1}-x\right)l\left(\mathbf{1}-x\right)+
\mu\left(x\right)\left(l\left(x\right)+c\right) \quad (x\in ]0,1[^{k})
\end{equation}
in case $\mu$ is a projection; by
\begin{equation}
h\left(x\right)=l\left(\mathbf{1}-x\right)+c \qquad (x\in ]0,1[^{k})
\end{equation}
in case $\mu\equiv 1$; and finally by
\begin{equation}
h\left(x\right)=b\mu\left(\mathbf{1}-x\right)+c\mu\left(x\right)-b \quad (x\in ]0,1[^{k})
\end{equation}
in all other cases. Here $l:]0,1[^{k}\rightarrow\R$ is an arbitrary logarithmic function, and
$b, c\in\R$ are constants.
\end{theorem}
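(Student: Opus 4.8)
The plan is to argue in two stages: first verify that each listed function solves the equation, and then prove that no other solutions exist. For the sufficiency part I would substitute each of the three candidate forms into
\[
h(x)+\mu(\mathbf{1}-x)h\!\left(\frac{y}{\mathbf{1}-x}\right)=h(y)+\mu(\mathbf{1}-y)h\!\left(\frac{x}{\mathbf{1}-y}\right)
\]
and check the identity directly. In every case the arguments $\frac{y}{\mathbf{1}-x}$ and $\mathbf{1}-\frac{y}{\mathbf{1}-x}=\frac{\mathbf{1}-x-y}{\mathbf{1}-x}$ factor through $\mu$ by multiplicativity, while the logarithmic terms telescope by $l(uv)=l(u)+l(v)$; in the projection case one additionally uses $\mu(\mathbf{1}-x)=1-\mu(x)$, which follows from $\mu(\mathbf{1})=1$ together with additivity of the projection. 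This step is purely computational.

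The substantive direction is necessity. I would first rewrite the equation as the symmetry statement $\Phi(x,y)=\Phi(y,x)$, where
\[
\Phi(x,y)=h(x)+\mu(\mathbf{1}-x)h\!\left(\frac{y}{\mathbf{1}-x}\right).
\]
Iterating this with a third variable produces an associativity/cocycle relation: expanding $\Phi$ for a triple $(x,y,z)$ with $x+y+z\in\,]0,1[^{k}$ in two different orders and using the symmetry twice shows that the resulting three--place expression is invariant under all permutations of $x,y,z$. This totally symmetric relation is the engine of the proof, since it lets me compare the effect of transposing arguments and thereby separate the part of $h$ governed by $\mu$ from a remaining logarithmic part.

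From the cocycle relation I would extract, by substitutions staying inside the open domain, a one--place equation. The idea is to show that after subtracting the multiplicative contribution the function $x\mapsto h(x)-(\text{multiplicative terms})$ satisfies a Cauchy/logarithmic equation on $]0,1[^{k}$; the extension Theorems \ref{ext.mult} and \ref{ext.log} then allow me to replace $\mu$ and the emerging logarithmic function by their global versions on $\R^{k}_{+}$ and $\R^{k}_{++}$, and the product/sum structure theorems reduce everything to the coordinatewise multiplicative and logarithmic building blocks.

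The case distinction is where Lemmas \ref{lem.mult} and \ref{lem.log} enter, and this is the main obstacle. The reductions produce auxiliary multiplicative and logarithmic functions together with symmetry constraints of the form $m(x)=m(\mathbf{1}-x)$ or $\lambda(x)=\lambda(\mathbf{1}-x)$. Lemma \ref{lem.mult} collapses such a multiplicative $m$ to the constant $1$ (or $0$), and Lemma \ref{lem.log} annihilates such a logarithmic $\lambda$; these collapses are exactly what distinguish the projection regime (where $\mu$ is additive, $\mu(\mathbf{1}-x)=1-\mu(x)$, and a full logarithmic freedom survives) from the case $\mu\equiv 1$ and from the generic multiplicative case (where only the constants $b,c$ remain). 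The delicate point is to determine, before invoking the lemmas, which regime the given $\mu$ falls into, and to carry the constants $b$ and $c$ correctly through each reduction. Because no regularity of $h$ is assumed, the continuity--based structure theorems cannot be used; the whole argument must be driven by the algebraic identities alone, and it is this regularity--free bookkeeping in the generic case that I expect to be the hardest step.
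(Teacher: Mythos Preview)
The paper does not prove Theorem~\ref{FEIM} at all: it is quoted verbatim from \cite{ESS98} as a preliminary tool, with no argument given. Consequently there is no ``paper's own proof'' to compare your proposal against; the sole purpose of the statement here is to feed into the proof of Theorem~\ref{main}.

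That said, your sketch contains a substantive misconception about where Lemmas~\ref{lem.mult} and~\ref{lem.log} belong. Those lemmas concern multiplicative or logarithmic functions satisfying the \emph{extra} symmetry $m(x)=m(\mathbf{1}-x)$, and nothing in the hypotheses of Theorem~\ref{FEIM} produces such a constraint: the fundamental equation of information is already symmetric under swapping $x$ and $y$, but it does \emph{not} force $h(x)=h(\mathbf{1}-x)$ (indeed, the listed solutions $h(x)=l(\mathbf{1}-x)+c$ and $h(x)=b\mu(\mathbf{1}-x)+c\mu(x)-b$ with $b\neq c$ are generically asymmetric). In the paper those two lemmas are invoked only \emph{after} Theorem~\ref{FEIM}, inside the proof of Theorem~\ref{main}, where the additional symmetry $h(x)=h(\mathbf{1}-x)$ comes from the assumed symmetry of $f$. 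So the case split you describe --- collapsing auxiliary multiplicative or logarithmic functions via $m(x)=m(\mathbf{1}-x)$ --- is not the mechanism that drives the trichotomy in Theorem~\ref{FEIM}; the actual argument in \cite{ESS98} distinguishes the cases by analysing whether $\mu$ is additive (projection), identically~$1$, or neither, directly from the cocycle identity, without any $x\leftrightarrow\mathbf{1}-x$ symmetry.
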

\noindent
The following lemma concerns a special associativity equation.
\begin{lemma}\label{ass.eq}\cite{Mak00}
Suppose that the function $A:\R^{2k}_{++}\rightarrow\R$ satisfies the functional equation
\begin{equation}
A\left(u+v, v\right)=
A\left(u, v+w\right) .
\qquad \left(u, v, w\in\R^{k}_{++}\right)
\end{equation}
Then there exists a function $\varphi:\R^{k}_{++}\rightarrow\R$ such that
\[
A\left(u, v\right)=\varphi\left(u+v\right)
\]
holds for all $u, v\in\R^{k}_{++}$, moreover if the function $A$ is continuous on $\R^{2k}_{++}$,
then the function $\varphi$ is continuous on $\R^{k}_{++}$.
\end{lemma}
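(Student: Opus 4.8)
The plan is to read the hypothesis as a \emph{transfer principle}. Substituting $p=u+v$ and $q=w$, the equation $A\left(u+v, w\right)=A\left(u, v+w\right)$ takes the form
\[
A\left(p, q\right)=A\left(p-v, q+v\right),
\]
which is legitimate for every $v$ with $\mathbf{0}<v<p$, so that $p-v\in\R^{k}_{++}$. Thus moving a positive vector out of the first argument and into the second leaves the value of $A$ unchanged and preserves the sum of the two arguments. This strongly suggests that $A$ depends on the pair $(u,v)$ only through $u+v$, and the whole proof is organized around establishing exactly this.

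To make it precise I would fix $t\in\R^{k}_{++}$ and compare two arbitrary decompositions $t=u+v=u'+v'$ with $u,v,u',v'\in\R^{k}_{++}$, the goal being $A(u,v)=A(u',v')$. Applying the transfer identity with $p=u$ and $q=t-u$ lets me lower the first coordinate to any point strictly below $u$; doing the same starting from $u'$, I would drive both decompositions to one and the same first coordinate. The natural target is
\[
m=\tfrac{1}{2}\min\left(u, u'\right)\in\R^{k}_{++},
\]
the componentwise minimum, which satisfies $\mathbf{0}<m<u$ and $\mathbf{0}<m<u'$. Two applications of the transfer identity then yield
\[
A\left(u, v\right)=A\left(m, t-m\right)=A\left(u', v'\right),
\]
so that the common value is a function of $t$ alone.

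Once well-definedness is secured I would simply set $\varphi(t):=A\left(\tfrac{t}{2}, \tfrac{t}{2}\right)$ for $t\in\R^{k}_{++}$; by the previous step $A(u,v)=\varphi(u+v)$ for all $u,v\in\R^{k}_{++}$, which is the asserted representation. The continuity clause is then immediate: if $A$ is continuous on $\R^{2k}_{++}$, then $\varphi$ is the composition of the continuous map $t\mapsto\left(\tfrac{t}{2}, \tfrac{t}{2}\right)$ with $A$, hence continuous on $\R^{k}_{++}$.

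The step I expect to be the main obstacle is precisely the passage from the scalar to the vector case. For $k=1$ the two first coordinates $u$ and $u'$ are always comparable, and one can pass from one decomposition to the other by a single transfer; for $k>1$ the componentwise order is only partial, so $u$ and $u'$ need not be comparable and no single move connects them. Routing through the intermediate point $m$ lying strictly below both first coordinates, and using two transfers, repairs this, but the delicate part is checking that every intermediate vector stays in $\R^{k}_{++}$ (this is exactly why the factor $\tfrac{1}{2}$ and the strict inequalities are needed), so that each invocation of the hypothesis is admissible.
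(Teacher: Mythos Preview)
Your argument is correct. The only genuine obstacle in passing from $k=1$ to general $k$ is exactly the one you isolate: since the componentwise order on $\R^{k}_{++}$ is only partial, two decompositions $t=u+v=u'+v'$ cannot in general be linked by a single ``transfer'', and routing through $m=\tfrac{1}{2}\min(u,u')$ (which lies strictly below both $u$ and $u'$) fixes this. The definition $\varphi(t)=A(t/2,t/2)$ and the continuity conclusion are then immediate.

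As for comparison with the paper: the paper does not give a self-contained proof of this lemma. It simply remarks that the proof runs as for Lemma~1 in \cite{Mak00}, with the operations and the ordering taken componentwise. Your write-up is precisely such a componentwise adaptation, so the approaches coincide. (Incidentally, you have silently corrected what appears to be a typo in the displayed hypothesis: the equation used throughout the paper, and the one you prove, is $A(u+v,w)=A(u,v+w)$ rather than $A(u+v,v)=A(u,v+w)$.)
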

The proof of this theorem runs similar as the proof of Lemma 1. in \cite{Mak00}, since the operations
as well as the ordering are to be done componentwise. All the same Lemma \ref{ass.eq}. concerns
a more special equation but on a more general domain, than the mentioned result of \cite{Mak00}.

\section{The modified entropy equation}

In this section we shall solve equation (\ref{gen.ent.eq}). Our main result is the following theorem.

\begin{theorem}
\label{main}
Let $\mu:\R^{k}_{++}\rightarrow\R$ be a multiplicative,
$f:\R^{3k}_{+}\rightarrow\R$ be a symmetric function and
assume that equation (\ref{gen.ent.eq}) holds for all $x, y, z\in\R^{k}_{++}$. Then,\\
\noindent
in case $\mu$ is a projection, there exist functions
	$l, \psi_{1}:\R^{k}_{++}\rightarrow\R$ such that $l$ is a
	logarithmic function on $\R^{k}_{++}$ and
	\begin{equation}
	f\left(x, y, z\right)=\mu\left(x\right)l\left(x\right)+\mu\left(y\right)l\left(y\right)+\mu\left(z\right)l\left(z\right)+
	\psi_{1}\left(x+y+z\right)
	\end{equation}
	holds for all $x, y, z\in\R^{k}_{+}$, \\
in case $\mu\equiv 1$, there exists a function $\psi_{2}:\R^{k}_{++}\rightarrow\R$
	such that
	\begin{equation}
	f\left(x, y, z\right)=\psi_{2}\left(x+y+z\right)
	\end{equation}
	holds for all $x, y, z\in\R^{k}_{++}$, \\
 in all other cases, there exist a function $\psi_{3}:\R^{k}_{++}\rightarrow\R$ and a constant
	$b\in\R$ such that
	\begin{equation}
	f\left(x, y, z\right)=
	 b\left(\mu\left(x\right)+\mu\left(y\right)+\mu\left(z\right)\right)+\psi_{3}\left(x+y+z\right)
	\end{equation}
	holds for all $x, y, z\in\R^{k}_{++}$.

\end{theorem}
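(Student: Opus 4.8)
The plan is to extract from the symmetric solution $f$ a single--variable \emph{information function} and to show that it satisfies the fundamental equation of information of multiplicative type, so that Theorem \ref{FEIM} becomes applicable. Concretely, I would introduce
\[
h(t)=f\left(\mathbf{0}, t, \mathbf{1}-t\right)\qquad \left(t\in \left.]0,1[\right.^{k}\right),
\]
which is exactly the function occurring in the last term of (\ref{gen.ent.eq}) after normalization. First I would exploit the symmetry of $f$: interchanging the last two variables in (\ref{gen.ent.eq}) leaves the left--hand side and the first term on the right--hand side unchanged, so the two shape terms must agree, which gives $h(t)=h(\mathbf{1}-t)$ on $]0,1[^{k}$. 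This symmetry is what will later let me invoke Lemmas \ref{lem.mult} and \ref{lem.log}.

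Next I would restrict to strictly positive triples $p,q,r$ with $p+q+r=\mathbf{1}$. For such triples (\ref{gen.ent.eq}) together with the symmetry of $f$ yields
\[
f\left(p, q, r\right)=h(p)+\mu\left(\mathbf{1}-p\right)h\!\left(\frac{q}{\mathbf{1}-p}\right),
\]
because $f(p,\mathbf{1}-p,\mathbf{0})=f(\mathbf{0},p,\mathbf{1}-p)=h(p)$. The decisive point is that every argument used here is strictly positive, so no degenerate (zero) instance of (\ref{gen.ent.eq}) is needed. Writing the same quantity for $f(q,p,r)$ and equating, the $f$--terms cancel and I obtain precisely the equation of Theorem \ref{FEIM} for $h$ on $\{(x,y):x,y,x+y\in]0,1[^{k}\}$.

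I would then read off the three possible forms of $h$ from Theorem \ref{FEIM} and impose the constraint $h(t)=h(\mathbf{1}-t)$ in each. In the projection case the symmetry forces the additive constant to vanish (since $\mu(t)\neq\mu(\mathbf{1}-t)$ for a nontrivial projection), leaving $h(t)=\mu(t)l(t)+\mu(\mathbf{1}-t)l(\mathbf{1}-t)$; when $\mu\equiv 1$ the symmetry gives $l(t)=l(\mathbf{1}-t)$, whence $l\equiv 0$ by Lemma \ref{lem.log} and $h$ is constant; in the remaining case the symmetry forces the two constants to coincide (otherwise $\mu(t)=\mu(\mathbf{1}-t)$ and Lemma \ref{lem.mult} would put us back into one of the previous cases), giving $h(t)=b\bigl(\mu(t)+\mu(\mathbf{1}-t)\bigr)-b$. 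Along the way I would use $\mu(\mathbf{1})\in\{0,1\}$, so that $\mu\not\equiv 0$ forces $\mu(\mathbf{1})=1$.

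Finally I would reconstruct $f$ on the whole cone. Substituting the now--known $h$ into (\ref{gen.ent.eq}) and using the multiplicativity of $\mu$ together with the logarithmic property of $l$, the shape term $\mu(y+z)h(y/(y+z))$ collapses to an explicit symmetric expression plus a term depending only on $y+z$; collecting these, $f(x,y,z)$ equals the claimed \emph{main part} (the symmetric sum in $x,y,z$) plus a residual two--variable function $C(x,y+z)$. Invoking the symmetry of $f$ once more turns the relation into $C(x,y+z)=C(y,x+z)$, an associativity--type identity to which Lemma \ref{ass.eq} applies; hence $C$ depends only on $x+y+z$, which I would name $\psi_{1}$, $\psi_{2}$ or $\psi_{3}$ according to the case. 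The main obstacle is this last bookkeeping together with the case analysis of the previous step: one must check in each of the three regimes that the explicit terms reassemble into the stated symmetric expression and that the leftover is genuinely radial, while also verifying the boundary behaviour that allows the projection--case formula to be asserted on all of $\R^{k}_{+}$ rather than only on $\R^{k}_{++}$.
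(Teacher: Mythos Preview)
Your proposal is correct and follows essentially the same route as the paper: introduce the boundary function $h(t)=f(\mathbf{0},t,\mathbf{1}-t)$, use symmetry of $f$ on the simplex $x+y+z=\mathbf{1}$ to derive the fundamental equation of information for $h$, apply Theorem~\ref{FEIM}, simplify each of the three resulting forms via $h(t)=h(\mathbf{1}-t)$ together with Lemmas~\ref{lem.mult} and~\ref{lem.log}, and then reduce the residual two--variable piece to Lemma~\ref{ass.eq}. The only cosmetic difference is that the paper carries along the auxiliary two--variable function $F(u,v)=f(\mathbf{0},u,v)$ explicitly, whereas you absorb it into your residual $C$; the logic and the case analysis are otherwise identical.
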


\begin{proof}
Assume that (\ref{gen.ent.eq}) holds for all $x, y, z\in\R^{k}_{++}$.
Define the functions $F$ and $h$ as follows
\begin{equation}\label{Fdef}
F\left(u, v\right)=f\left(\mathbf{0}, u, v\right)  \quad (u, v\in\R^{k}_{++})
\end{equation}
and
\begin{equation}\label{hdef}
h\left(t\right)=F\left(\mathbf{1}-t, t\right). \quad (t\in ]0, 1[^{k})
\end{equation}
Then due to the symmetry of the function $f$ we get that
\begin{equation}\label{Fsym}
F\left(u, v\right)=F\left(v, u\right)\quad (u, v\in\R^{k}_{++})
\end{equation}
and
\begin{equation}\label{hsym}
h\left(t\right)=h\left(\mathbf{1}-t\right). \quad (t\in ]0, 1[^{k})
\end{equation}
Furthermore it follows from (\ref{gen.ent.eq}), (\ref{Fdef}) and (\ref{hdef}) that
\begin{equation}\label{eq8}
f\left(x, y, z\right)=F\left(x, y+z\right)+\mu\left(y+z\right)h\left(\frac{y}{y+z}\right)
\end{equation}
holds for all $x, y, z\in\R^{k}_{++}$.
On the other hand the symmetry of $f$ implies that
\begin{equation}\label{eq9}
F\left(x, y+z\right)+\mu\left(y+z\right)h\left(\frac{y}{y+z}\right)=
F\left(y, x+z\right)+\mu\left(x+z\right)h\left(\frac{x}{x+z}\right)
\end{equation}
for all $x, y, z\in\R^{k}_{++}$. \\
\noindent
Define the set $D=\left\{(x, y)\in\R^{2k}_{++}\vert x, y, x+y\in ]0, 1[^{k}\right\}$. Let
$(x, y)\in D$ and substitute $z=\mathbf{1}-x-y$ into (\ref{eq9}). Then by (\ref{hdef}) we obtain that
\begin{equation}
h\left(x\right)+\mu\left(\mathbf{1}-x\right)h\left(\frac{y}{\mathbf{1}-x}\right)=
h\left(y\right)+\mu\left(\mathbf{1}-y\right)h\left(\frac{x}{\mathbf{1}-y}\right)
\end{equation}
holds for all $(x, y)\in D$, i.e., $h$ satisfies the fundamental equation of information of
multiplicative type on $D$.
This means by Theorem \ref{FEIM}. and by the results of Section 4.3. in \cite{ESS98} that
\[
h\left(x\right)=\mu\left(\mathbf{1}-x\right)l\left(\mathbf{1}-x\right)+
\mu\left(x\right)\left(l\left(x\right)+c\right) \qquad (x\in ]0,1[^{k})
\]
in case $\mu$ is a projection;
\[
h\left(x\right)=l\left(\mathbf{1}-x\right)+c \qquad (x\in ]0,1[^{k})
\]
in case $\mu\equiv 1$; finally
\[
h\left(x\right)=b\mu\left(\mathbf{1}-x\right)+c\mu\left(x\right)-b \qquad (x\in ]0,1[^{k})
\]
in all other cases. Here $l:]0,1[^{k}\rightarrow\R$ is an arbitrary logarithmic function and
$b, c\in\R$ are constants. \\
So, concerning the function $\mu$ we have to distinguish three cases. \\
\noindent
\textsc{Case I.} ($\mu$ is a projection)\\
\noindent
First note that due to (\ref{hsym}) we have $c=0$.
Indeed, $h\left(x\right)=h\left(\mathbf{1}-x\right)$ on $]0, 1[^{k}$ implies that
	\[
	\begin{array}{l}
	\mu\left(\mathbf{1}-x\right)l\left(\mathbf{1}-x\right)+
	\mu\left(x\right)l\left(x\right)+c\mu\left(x\right) \\
	=\mu\left(x\right)l\left(x\right)+
	\mu\left(\mathbf{1}-x\right)l\left(\mathbf{1}- x\right)+c\mu\left(\mathbf{1}-x\right)
	\end{array}
\]
holds for all $x\in]0, 1[^{k}$, but this means that
\[
c\mu\left(x\right)=c\mu\left(\mathbf{1}-x\right)
\]
for all $x\in ]0, 1[^{k}$, i.e., either $c=0$ or $\mu\left(x\right)=\mu\left(\mathbf{1}-x\right)$ on
$]0, 1[^{k}$, but due to Lemma \ref{lem.mult}. this implies $\mu\equiv 1$ or
$\mu\equiv 0$. In case $\mu$ is a non identically zero projection, we get that
\[
h\left(x\right)=\mu\left(x\right)l\left(x\right)+\mu\left(\mathbf{1}-x\right)l\left(\mathbf{1}-x\right)
\]
holds for all $]0, 1[^{k}$. Therefore
\[
\begin{array}{l}
\mu\left(y+z\right)h\left(\frac{y}{y+z}\right)
 = \mu\left(y+z\right)\left(\mu\left(\frac{z}{y+z}\right)l\left(\frac{z}{y+z}\right)+
\mu\left(\frac{y}{y+z}\right)l\left(\frac{y}{y+z}\right)\right)\\
 = \mu\left(y+z\right)\mu\left(\frac{z}{y+z}\right)l\left(\frac{z}{y+z}\right)+
\mu\left(y+z\right)\mu\left(\frac{y}{y+z}\right)l\left(\frac{y}{y+z}\right)\\
 = \mu\left(z\right)l\left(\frac{z}{y+z}\right)+\mu\left(y\right)l\left(\frac{y}{y+z}\right).
\end{array}
\]
Thus by (\ref{eq9})
\[
\begin{array}{l}
F\left(x, y+z\right)+\mu\left(z\right)l\left(\frac{z}{y+z}\right)+\mu\left(y\right)l\left(\frac{y}{y+z}\right)\\
=
F\left(y, x+z\right)+\mu\left(z\right)l\left(\frac{z}{x+z}\right)+\mu\left(x\right)l\left(\frac{x}{x+z}\right).
\end{array}
\]
Using that $l$ is logarithmic and $\mu$ is a projection, i.e., additive and multiplicative, we obtain that
\[
\begin{array}{l}
F\left(x, y+z\right)+\mu\left(z\right)l\left(z\right)\\
-\mu\left(z\right)l\left(y+z\right)+
\mu\left(y\right)l\left(y\right)-\mu\left(y\right)l\left(y+z\right)\\
=F\left(y, x+z\right)+\mu\left(z\right)l\left(z\right)\\
-\mu\left(z\right)l\left(x+z\right)+
\mu\left(x\right)l\left(x\right)-\mu\left(x\right)l\left(x+z\right)
\end{array}
\]
holds for all $x, y, z\in\R^{k}_{++}$, i.e.,
\begin{equation}
\begin{array}{l}
F\left(x, y+z\right)-\mu\left(x\right)l\left(x\right)-\mu\left(y+z\right)l\left(y+z\right)\\
=
F\left(y, x+z\right)-\mu\left(y\right)l\left(y\right)-\mu\left(x+z\right)l\left(x+z\right).
\end{array}
\end{equation}
Let
\begin{equation}\label{Hdef}
H\left(x, y\right)=F\left(x, y\right)-\mu\left(x\right)l\left(x\right)-\mu\left(y\right)l\left(y\right),
\quad(x, y\in\R^{k}_{++})
\end{equation}
then
\[
H\left(x, y+z\right)=H\left(y, x+z\right)
\]
holds for all $x, y, z\in\R^{k}_{++}$, which is a special associativity equation. Thus by Lemma \ref{ass.eq}.
we get that
there exists a function $\psi_{1}:\R^{k}_{++}\rightarrow\R$ such that
\[
H\left(x, y\right)=\psi_{1}\left(x+y\right)
\]
for all $x, y\in\R^{k}_{++}$.
Because of (\ref{eq8}) and (\ref{Hdef}) this means that
\[
f\left(x, y, z\right)=\mu\left(x\right)l\left(x\right)+
\mu\left(y\right)l\left(y\right)+
\mu\left(z\right)l\left(z\right)+
\psi_{1}\left(x+y+z\right)
\]
is fulfilled for all $x, y, z\in\R^{k}_{++}$. \\
\noindent
In case $\mu\equiv 0$ we get immediately from (\ref{eq9}) a special associativity equation, namely
\[
F\left(x, y+z\right)=
F\left(y, x+z\right).
\quad \left(x, y, z\in\R^{k}_{++}\right)
\]
Thus by Lemma \ref{ass.eq}. there exists a function
$\Psi_{1}:\R^{k}_{++}\rightarrow\R$ such that
\[
F\left(x, y\right)=\Psi_{1}\left(x+y\right).
\quad \left(x, y\in\R^{k}_{++}\right)
\]
Since $\mu\equiv 0$, this implies that
\[
f\left(x, y, z\right)=
\mu\left(x\right)l(x)+\mu\left(y\right)l(y)+\mu\left(z\right)l(z)+\Psi_{1}\left(x+y+z\right)
\]
holds for all $x, y, z\in\R^{k}_{++}$, with some logarithmic function
$l:\R^{k}_{++}\rightarrow\R$.\\
\noindent
\textsc{Case II. } ($\mu\equiv 1$)\\
\noindent
In this case
\[
h\left(x\right)=l\left(\mathbf{1}-x\right)+c
\]
holds on $]0,1[^{k}$. However, by equation (\ref{hsym}) we get that
\[
l\left(\mathbf{1}-x\right)+c=l\left(x\right)+c \quad
\]
Since $l$ is logarithmic by Lemma \ref{lem.log}. this means that
$l$ is identically zero on $]0,1[^{k}$. Thus
\[
h\left(x\right)=c, \qquad \left(x\in ]0,1^{k}[\right)
\]
with a constant $c\in \R$.
Now using equation (\ref{eq9}) we obtain that
\begin{equation}\label{eq.ass.1}
F\left(x, y+z\right)=F\left(y, x+z\right)
\end{equation}
holds for all $x, y, z\in\R^{k}_{++}$. Again we get a special associativity equation. Hence
by Lemma \ref{ass.eq}. there exists a function $\psi^{*}_{2}:\R^{k}_{++}\rightarrow\R$ such that
\[
F\left(x, y\right)=\psi^{*}_{2}\left(x+y\right).
\]
Together with equation (\ref{eq8}) this means that
\[
f\left(x, y, z\right)=\psi^{*}_{2}\left(x+y+z\right)+c
\]
holds for all $x, y, z\in\R^{k}_{++}$.
Define the function $\psi_{2}$ on $\R^{k}_{++}$ by
\[
\psi_{2}(x)=\psi^{*}_{2}(x)+c,  \qquad \left(x\in \R^{k}_{++}\right)
\]
then we obtain that
\[
f\left(x, y, z\right)=\psi_{2}\left(x+y+z\right)
\]
holds for all $x, y, z \in\R^{k}_{++}$.
\\
\noindent
\textsc{Case III. } ($\mu$ is neither a projection nor identically $1$)\\
\noindent
Since the function $h$ is symmetric to the point
$\mathbf{\frac{1}{2}}\in\R^{k}_{++}$, we get that
\[
b\mu\left(\mathbf{1}-x\right)+c\mu\left(x\right)-b=
b\mu\left(x\right)+c\mu\left(\mathbf{1}-x\right)-b
\]
holds for all $x \in ]0,1[^{k}$, where $b, c\in\R$ are constants.
This means that
\[
\left(b-c\right)\mu\left(\mathbf{1}-x\right)=\left(b-c\right)\mu\left(x\right)
\quad (x\in ]0,1[^{k})
\]
Thus either $b=c$ or $\mu\left(\mathbf{1}-x\right)=\mu\left(x\right)$ holds
for all $x\in ]0,1[^{k}$. Although the last statement implies that
$\mu$ is identically $1$ or
identically $0$ on $]0,1[^{k}$ (Lemma \ref{lem.mult}.), which contradicts to the assumption that
$\mu$ in neither identically $1$ nor a projection. So $b=c$. Thus we obtain that
\begin{equation}\label{eq11}
\mu\left(y+z\right)h\left(\frac{y}{y+z}\right)=b\mu\left(z\right)+b\mu\left(y\right).
\qquad (y, z\in\R^{k}_{++})
\end{equation}
Therefore by (\ref{eq9})
\begin{equation}
F\left(x, y+z\right)+b\mu\left(z\right)+b\mu\left(y\right)=
F\left(y, x+z\right)+b\mu\left(z\right)+b\mu\left(x\right)
\end{equation}
for all $x, y, z\in\R^{k}_{++}$ that is,
\begin{equation}
F\left(x, y+z\right)-b\mu\left(x\right)=
F\left(y, x+z\right)-b\mu\left(y\right)
\end{equation}
holds for all $x, y, z\in\R^{k}_{++}$.
Define the function $H$ on $\R^{2k}_{++}$ as follows
\begin{equation}\label{Hdef2}
H\left(x, y\right)=F\left(x, y\right)-b\mu\left(x\right).
\end{equation}
Then $H$ satisfies on $\R^{2k}_{++}$ a special associativity equation, namely
\begin{equation}
H\left(x, y+z\right)=H\left(y, x+z\right). \qquad (x, y, z\in\R^{k}_{++})
\end{equation}
So by Lemma \ref{ass.eq}. there exists a function $\psi_{3}:\R^{k}_{++}\rightarrow\R$ such that
\[
H\left(x, y\right)=\psi_{3}\left(x+y\right)
\]
holds for all $x, y\in\R^{k}_{++}$, together with  equations (\ref{eq8}) and (\ref{eq11}) this means that
\begin{equation}
f\left(x, y, z\right)=b\left(\mu\left(x\right)+\mu\left(y\right)+\mu\left(z\right)\right)+
\psi_{3}\left(x+y+z\right)
\end{equation}
holds for all $x, y, z\in\R^{k}_{++}$.
\\
\noindent
This ends the proof since concerning the function $\mu$ all the cases were covered.
\end{proof}

\section{Regular solutions of equation (\ref{gen.ent.eq}) and other corollaries}

The last section of the present paper concerns the regular solutions of
the modified entropy equation and we will show that equation
(\ref{gen.ent.eq}) is a generalization of equation
posed in \cite{AGMS08} as well as the entropy equation.

\begin{theorem}\label{Thm2.1}
Let $\mu:\R^{k}_{++}\rightarrow\R$ be a multiplicative function.
If the function $f:\R^{k}_{++}\rightarrow\R$ is symmetric, continuous on $\R^{3}_{++}$ and
satisfies (\ref{gen.ent.eq}) for all $x, y, z\in\R^{k}_{++}$, then in case $\mu$ is a projection,
there exists a continuous function $\Psi_{1}:\R^{k}_{++}\rightarrow\R$ such that
\begin{equation}\label{E3.1.1.}
f\left(x, y, z\right)=\mu\left(x\right)\log\left(x\right)+
\mu\left(y\right)\log\left(y\right)+
\mu\left(z\right)\log\left(z\right)+
\Psi_{1}\left(x+y+z\right)
\end{equation}
holds for all $x, y, z\in\R^{k}_{++}$, with arbitrary basis for the logarithm, \\
\noindent
in case $\mu\equiv 1$, there exist a continuous function
$\Psi_{2}:\R^{k}_{++}\rightarrow\R$ such that
\begin{equation} \label{E3.1.2.}
f\left(x, y, z\right)=\Psi_{2}\left(x+y+z\right)
\end{equation}
holds for all $x, y, z\in\R^{k}_{++}$, \\
\noindent
in all other cases, there exist a continuous function $\Psi_{3}:\R^{k}_{++}\rightarrow\R$
and a constant $b\in\R$ such that
\begin{equation}\label{E3.1.3.}
f\left(x, y, z\right)=b\left(\mu\left(x\right)+
\mu\left(y\right)+
\mu\left(z\right)\right)+
\Psi_{3}\left(x+y+z\right).
\end{equation}
holds for all $x, y, z\in\R^{k}_{++}$.
\end{theorem}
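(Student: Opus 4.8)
The plan is to obtain Theorem~\ref{Thm2.1} as a direct specialization of the general solution in Theorem~\ref{main}, invoking continuity to pin down the arbitrary logarithmic functions that appear there. Since $f$ is symmetric, continuous on $\R^{3k}_{++}$, and satisfies~(\ref{gen.ent.eq}), the hypotheses of Theorem~\ref{main} hold, so in each of the three cases ($\mu$ a projection, $\mu\equiv 1$, and the remaining case) we already have the stated form of $f$ with some logarithmic $l$ and some function $\psi_i$. The only remaining work is to upgrade these building blocks to \emph{continuous} objects: the logarithmic $l$ must become $\log$ (with arbitrary base) or vanish, and each $\psi_i$ must be shown continuous and renamed $\Psi_i$.

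First I would treat Case~II ($\mu\equiv 1$), which is the cleanest. Here Theorem~\ref{main} already gives $f(x,y,z)=\psi_2(x+y+z)$ with no logarithmic term surviving. Fixing any two of the arguments (say $y=z=\mathbf{c}$ for a fixed $\mathbf{c}\in\R^{k}_{++}$) and varying $x$, continuity of $f$ forces $t\mapsto\psi_2(t+2\mathbf{c})$ to be continuous on a translate of $\R^{k}_{++}$; letting the fixed arguments range shows $\psi_2$ is continuous on all of $\R^{k}_{++}$. Setting $\Psi_2:=\psi_2$ gives~(\ref{E3.1.2.}). The same fixing-and-varying argument handles the $\psi_3$ in Case~III: from the representation $f(x,y,z)=b(\mu(x)+\mu(y)+\mu(z))+\psi_3(x+y+z)$, continuity of $f$ together with continuity of the multiplicative $\mu$ (the structure theorems in the excerpt identify continuous $\mu$ as $\mu(x)=\prod_i x_i^{\alpha_i}$, hence continuous) yields continuity of $\psi_3=:\Psi_3$, establishing~(\ref{E3.1.3.}).

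The substantive case is Case~I ($\mu$ a projection), where the logarithmic term $\mu(\cdot)l(\cdot)$ genuinely appears. My plan is to isolate $l$ and apply the continuous-logarithmic structure theorem of the excerpt, which says a continuous logarithmic function on $\R^{k}_{++}$ is either $\log$ (arbitrary base) or identically zero. Concretely, from $f(x,y,z)=\mu(x)l(x)+\mu(y)l(y)+\mu(z)l(z)+\psi_1(x+y+z)$ I would form a suitable difference that cancels the $\psi_1$ term—for instance comparing $f$ at $(x,y,z)$ with its value after a volume-preserving shift that keeps $x+y+z$ constant—so as to express a combination of the $\mu\cdot l$ terms as a continuous function. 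Since a continuous projection $\mu$ (being additive and multiplicative) forces $\mu(x)=\prod_i x_i$ or $\mu\equiv 1$ on the relevant cone, $\mu$ is continuous and nonvanishing, and one can divide out to conclude that $l$ agrees with a continuous logarithmic function. The structure theorem then gives $l=\log$ (up to base) or $l\equiv 0$; either way $l$ may be taken as $\log$, and continuity of $\psi_1=:\Psi_1$ follows as before, yielding~(\ref{E3.1.1.}).

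The main obstacle I anticipate is the \emph{clean separation} of $l$ from $\psi_1$ in Case~I: because $f$ is the only a priori continuous object, I must produce a combination of evaluations of $f$ that eliminates $\psi_1(x+y+z)$ while leaving a nonzero multiple of $l$, so that continuity transfers to $l$ rather than getting absorbed. A careful choice of three (or more) points sharing a common sum $x+y+z$, exploiting the additivity of the projection $\mu$ and the logarithmic identity for $l$, should accomplish this, but verifying that the resulting expression is genuinely a continuous logarithmic function (so that the structure theorem applies) is the delicate step. Once $l$ is identified as continuous, everything else is bookkeeping, and the three displayed forms~(\ref{E3.1.1.})--(\ref{E3.1.3.}) follow immediately.
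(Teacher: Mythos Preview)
Your strategy---apply Theorem~\ref{main} first and then post-process using continuity of $f$---is different from the paper's, which instead reruns the proof of Theorem~\ref{main} with continuity built in from the start. The paper's key opening move is a dichotomy: from the representation $f(x,y,z)=F(x,y+z)+\mu(y+z)\,h\!\left(\tfrac{y}{y+z}\right)$ and continuity of $f$, the product $(s,t)\mapsto \mu(s)h(t)$ is continuous, so either $h\equiv 0$ (and then $f$ depends only on $x+y+z$), or one can freeze $t$ at a point where $h\neq 0$ to read off that $\mu$ itself is continuous. With $\mu$ continuous the continuous form of Theorem~\ref{FEIM} applies, and the continuity clause of Lemma~\ref{ass.eq} gives continuity of the $\Psi_i$ directly.

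Your proposal has a genuine gap exactly where this dichotomy does the work. In Case~III you write that $\mu$ is continuous ``(the structure theorems in the excerpt identify continuous $\mu$ as $\mu(x)=\prod_i x_i^{\alpha_i}$, hence continuous)''---but this is circular: those theorems describe continuous multiplicative functions, they do not show that \emph{your} $\mu$ is continuous. In Case~III nothing rules out a wild multiplicative $\mu$ a priori; what must be shown is that either $b=0$ or $\mu$ is continuous, and your fixing-and-varying argument does not deliver this. Working from the final form $f=b(\mu(x)+\mu(y)+\mu(z))+\psi_3(x+y+z)$ alone, eliminating $\psi_3$ only yields that symmetric combinations like $\mu(t)+\mu(\mathbf{1}-t)$ are continuous, and it is not obvious (nor proved in your sketch) that this forces $\mu$ continuous. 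The paper's factored form $\mu(s)h(t)$ separates the variables and makes the conclusion immediate.

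A smaller point in Case~I: projections on $\R^{k}_{++}$ are indeed automatically continuous (being ring homomorphisms into $\R$), but your description ``$\mu(x)=\prod_i x_i$ or $\mu\equiv 1$'' is wrong---coordinate maps $\mu(x)=x_i$ are projections, while $\mu\equiv 1$ is not even additive. More importantly, your plan to isolate $l$ by comparing values of $f$ on a common level set of $x+y+z$ only shows that $x\mapsto \mu(x)l(x)+\mu(c-x)l(c-x)$ is continuous; extracting continuity of $l$ from this still needs an argument you have not supplied. The paper sidesteps all of this by getting $h$ continuous first and then invoking the continuous solution of the information equation, so the logarithm appears already in its regular form.
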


\begin{proof}
First let us observe that in case the function
$f:\R^{3k}\rightarrow\R$ is continuous and satisfies
equation (\ref{gen.ent.eq}), then either the multiplicative function
$\mu:\R^{k}_{++}\rightarrow\R$ is continuous or the map
$x\mapsto f\left(\mathbf{0}, \mathbf{1}-x, x\right)$ is identically zero on $]0,1[^{k}$, due
to the representation (\ref{eq8}).
Therefore, in the first case, if we define the functions $F$ and $h$ by (\ref{Fdef}) and (\ref{hdef}) on
$\R^{2k}_{++}$ and on $]0,1[^{k}$, respectively,
then they will be continuous on their domain, as well. \\
\noindent
Hence the function $h:]0,1[^{k}\rightarrow\R$ is continuous and satisfies the
fundamental equation of information on $D$, by Theorem \ref{FEIM}. and
the by the results of Section 4.3. in \cite{ESS98}
\[
h\left(x\right)=\mu\left(\mathbf{1}-x\right)\log\left(\mathbf{1}-x\right)+
\mu\left(x\right)\left(\log\left(x\right)+c\right),
\quad \left(x\in ]0,1[^{k}\right)
\]
in case $\mu$ is a projection;
\[
h\left(x\right)=\log\left(\mathbf{1}-x\right)+c
\quad \left(x\in ]0,1[^{k}\right)
\]
in case $\mu\equiv 1$;
\[
h\left(x\right)=b\mu\left(\mathbf{1}-x\right)+c\mu\left(x\right)-b
\quad \left(x\in ]0,1[^{k}\right)
\]
in all other cases, where $\log:\R^{k}_{++}\rightarrow\R$ denotes
an arbitrary continuous logarithmic function and
$b, c\in\R$ are constants. \\
\noindent
A same argument as in the proof of Theorem \ref{main}. can be made to get
\[
h\left(x\right)=\mu\left(\mathbf{1}-x\right)\log\left(\mathbf{1}-x\right)+
\mu\left(x\right)\log\left(x\right),
\quad \left(x\in ]0,1[^{k}\right)
\]
in case $\mu$ is a projection;
\[
h\left(x\right)=c
\quad \left(x\in ]0,1[^{k}\right)
\]
in case $\mu\equiv 1$;
\[
h\left(x\right)=b\mu\left(\mathbf{1}-x\right)+b\mu\left(x\right)-b
\quad \left(x\in ]0,1[^{k}\right)
\]
in all other cases, due to the property
\[
h\left(x\right)=h\left(\mathbf{1}-x\right).
\quad \left(x\in ]0,1[^{k}\right)
\]
On the other hand the function $F$ is continuous and concerning the function
$\mu$ we can define $H$ by (\ref{Hdef}), (\ref{eq.ass.1}) and (\ref{Hdef2})
to get special associativity equations.
This implies by Lemma \ref{ass.eq}. that the functions,
$\Psi_{1}, \Psi_{2}, \Psi_{3}$ occuring in Theorem \ref{main}. are continuous. \\
\noindent
In case the map $x\mapsto f\left(\mathbf{0}, \mathbf{1}-x, x\right)$ is identically zero on
$]0,1[^{k}$, then we get from (\ref{gen.ent.eq}), that
\begin{equation}\label{E3.1.4.}
f\left(x, y, z\right)=f\left(x, y+z, \mathbf{0}\right).
\quad \left(x, y, z\in\R^{k}_{++}\right)
\end{equation}
Define the function $F$ on $\R^{2k}_{++}$ by (\ref{Fdef}),
after interchanging $x$ and $y$ in the previous equation and
using the symmetry of $f$, we obtain that
\[
F\left(x, y+z\right)=F\left(y, x+z\right)=
F\left(z, x+y\right)=F\left(x+y, z\right)
\]
holds for all $x, y, z\in\R^{k}_{++}$. Thus by Lemma \ref{ass.eq} there exists a function
$\Psi:\R^{k}_{++}\rightarrow\R$ such that
\[
F\left(x, y\right)=\Psi\left(x+y\right)
\]
for all $x, y, z\in\R^{k}_{++}$. Finally, using (\ref{E3.1.4.}) we get that
\[
f\left(x, y, z\right)=\Psi\left(x+y+z\right) .
\quad \left(x, y, z\in\R^{k}_{++}\right)
\]

Hence all in all, we obtain that equations (\ref{E3.1.1.}), (\ref{E3.1.2.}) and (\ref{E3.1.3.}) are fulfilled.
\end{proof}

\begin{remark}
To get the same result as in the previous theorem we do not need to assume that the function $f$ is
continuous,  but it is enough to suppose that the map
\[
\left(x, y\right)\longmapsto f\left(\mathbf{0}, x, y\right)
\]
is continuous on $\R^{2k}_{++}$.
\end{remark}

\begin{proof}
Assume that the map
\[
\left(x, y\right)\longmapsto f\left(\mathbf{0}, x, y\right)
\]
is continuous on $\R^{2k}_{++}$, and the function $f:\R^{3k}_{++}\rightarrow\R$
satisfies equation (\ref{gen.ent.eq}).
Then the functions $F$ and $h$ defined by (\ref{Fdef}) and (\ref{hdef}), respectively,
are continuous. Thus Theorem \ref{Thm2.1} can be applied to get the stated result.
\end{proof}

In what follows we present and prove three additional results which are corollaries of our main result.
This theorems can be found in \cite{AGMS08}, \cite{Acz77} and in \cite{KM74}, respectively.

\begin{theorem}\cite{AGMS08}
Let $f:\R_{++}\rightarrow\R$ be a symmetric function and suppose that
\[
f\left(x, y, z\right)=f\left(x, y+z, 0\right)+
\left(y+z\right)f\left(0, \frac{y}{y+z}, \frac{z}{y+z}\right)
\]
holds for all $x, y, z\in\R_{++}$.
Then there exist function $\varphi, \psi:\R_{++}\rightarrow\R$ such that
\[
\varphi\left(x y\right)=x\varphi\left(y\right)+y\varphi\left(x\right)
\qquad \left(x, y\in\R_{++}\right)
\]
and
\[
f\left(x, y, z\right)=\varphi\left(x\right)+
\varphi\left(y\right)+
\varphi\left(z\right)+
\psi\left(x+y+z\right)
\]
for all $x, y, z\in\R_{++}$.
\end{theorem}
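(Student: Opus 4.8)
The plan is to recognize the displayed equation as the modified entropy equation (\ref{gen.ent.eq}) in the scalar case $k=1$ with the specific choice $\mu(t)=t$. First I would verify that the identity map $\mu:\R_{++}\to\R$, $\mu(t)=t$, is a projection: it is multiplicative, since $\mu(st)=st=\mu(s)\mu(t)$, and additive, since $\mu(s+t)=s+t=\mu(s)+\mu(t)$. With $k=1$ we have $\mu(y+z)=y+z$, so the hypothesis of the theorem is literally (\ref{gen.ent.eq}) for this $\mu$, and (\ref{ent.eq}) is nothing but this instance.

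Since $f$ is symmetric and satisfies (\ref{gen.ent.eq}) with $\mu$ a (non identically zero) projection, Theorem \ref{main}, Case~I, applies directly and supplies a logarithmic function $l:\R_{++}\to\R$ together with a function $\psi_{1}:\R_{++}\to\R$ such that
\[
f(x,y,z)=\mu(x)l(x)+\mu(y)l(y)+\mu(z)l(z)+\psi_{1}(x+y+z)
      = x\,l(x)+y\,l(y)+z\,l(z)+\psi_{1}(x+y+z)
\]
for all $x,y,z\in\R_{++}$. The last step is then to set $\varphi(t):=t\,l(t)$ and $\psi:=\psi_{1}$, which immediately yields the claimed representation $f(x,y,z)=\varphi(x)+\varphi(y)+\varphi(z)+\psi(x+y+z)$; it remains only to confirm the functional equation for $\varphi$. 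Using that $l$ is logarithmic, $l(xy)=l(x)+l(y)$, I would compute
\[
\varphi(xy)=xy\,l(xy)=xy\bigl(l(x)+l(y)\bigr)=y\bigl(x\,l(x)\bigr)+x\bigl(y\,l(y)\bigr)=y\varphi(x)+x\varphi(y),
\]
which is exactly $\varphi(xy)=x\varphi(y)+y\varphi(x)$.

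There is no real obstacle here, as the whole weight of the argument is borne by Theorem \ref{main}; the only content to add is the dictionary $\varphi=\mu\cdot l$ and the short, elementary check that the logarithmic identity for $l$ turns into the stated derivation-type identity for $\varphi$. The single point deserving a moment's attention is confirming that the scalar identity map lands in Case~I (is a projection) rather than in the case $\mu\equiv 1$ or the generic case, which is immediate from the additivity and multiplicativity noted above.
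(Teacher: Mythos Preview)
Your proposal is correct and follows essentially the same approach as the paper: identify the equation as the $k=1$ instance of (\ref{gen.ent.eq}) with $\mu(t)=t$, note that this $\mu$ is a projection, invoke Theorem~\ref{main} (Case~I) to get $f(x,y,z)=x\,l(x)+y\,l(y)+z\,l(z)+\psi(x+y+z)$ with $l$ logarithmic, and then set $\varphi(t)=t\,l(t)$ and verify the derivation identity from the logarithmic law. The paper does exactly this, with no additional ingredients.
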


\begin{proof}
Let us observe that the equation above is a special case of equation
(\ref{gen.ent.eq}) with $k=1$ and with the multiplicative function
\[
\mu\left(x\right)=x,
\qquad
\left(x\in\R_{++}\right)
\]
which is obviously a projection from $\R_{++}$ to $\R_{++}$.
Thus by Theorem \ref{main}. we get that there exist functions
$l, \psi:\R_{++}\rightarrow\R$ such that $l$ is
logarithmic on $\R_{++}$ and
\[
f\left(x, y, z\right)=xl\left(x\right)+
yl\left(y\right)+zl\left(z\right)+\psi\left(x+y+z\right)
\]
holds for all $x, y, z\in\R_{++}$. Define the function
$\varphi$ on $\R_{++}$ by
\[
\varphi\left(x\right)=xl\left(x\right),  \qquad
\left(x\in\R_{++}\right)
\]
then we have
\[
\varphi\left(x y\right)=x\varphi\left(y\right)+y\varphi\left(x\right)
\qquad \left(x, y\in\R_{++}\right)
\]
and
\[
f\left(x, y, z\right)=\varphi\left(x\right)+
\varphi\left(y\right)+
\varphi\left(z\right)+
\psi\left(x+y+z\right)
\]
for all $x, y, z\in\R_{++}$, which had to be proved.
\end{proof}

\begin{theorem}\label{aczel}\cite{Acz77}
Let
\[
\mathcal{D}=\left\{(x, y, z)\in\R^{3}\vert x\geq 0, y\geq 0, z\geq 0, x+y+z>0\right\}.
\]
Assume that the function $f:\mathcal{D}\rightarrow\R$ is symmetric, positively homogeneous of
degree $1$, satisfies the functional equation
\[
f\left(x, y, z\right)=f\left(x+y, z, 0\right)+f\left(x, y, 0\right)
\]
on
\[
S=\left\{\left(x, y, z\right)\in\R^{3}\vert x\geq 0, y\geq 0, z\geq 0, xy+yz+zx>0\right\}
\]
and the map $x\longmapsto f\left(1-x, x, 0\right)$ is either continuous at a point
or bounded on an interval or integrable on
the closed subintervals of $]0,1[$ or measurable on $]0,1[$. Then, and only then
\[
f\left(x, y, z\right)=
x\log\left(x\right)+y\log\left(y\right)+z\log\left(z\right)-
\left(x+y+z\right)\log\left(x+y+z\right)
\]
holds for all $\left(x, y, z\right)\in \mathcal{D}$, with arbitrary
basis for the logarithm and with the convention $0\cdot \log\left(0\right)=0$.
\end{theorem}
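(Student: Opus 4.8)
The plan is to realise Aczél's entropy equation \eqref{entropy.eq} as a special case of the modified entropy equation \eqref{gen.ent.eq} and then invoke Theorem \ref{main}. First I would exploit the symmetry of $f$: applying \eqref{entropy.eq} to the permuted triple $(y,z,x)$ and using symmetry to rewrite $f(y+z,x,0)=f(x,y+z,0)$ gives
\[
f(x,y,z)=f(x,y+z,0)+f(y,z,0)
\]
for $x,y,z>0$. Positive homogeneity of degree $1$ then yields $f(y,z,0)=(y+z)f\left(0,\frac{y}{y+z},\frac{z}{y+z}\right)$, so $f$ satisfies \eqref{gen.ent.eq} on $\R^{3}_{++}$ with $k=1$ and $\mu(x)=x$. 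Since $\mu(x)=x$ is a projection, Theorem \ref{main} furnishes a logarithmic function $l$ and a function $\psi_{1}$ with
\[
f(x,y,z)=xl(x)+yl(y)+zl(z)+\psi_{1}(x+y+z)
\]
on $\R^{3}_{++}$.

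Next I would pin down $\psi_{1}$. Homogeneity forces a rigid form: evaluating the previous display at $(tx,ty,tz)$ and using $l(ts)=l(t)+l(s)$ gives, with $s=x+y+z$, the relation $ts\,l(t)+\psi_{1}(ts)=t\psi_{1}(s)$, whence $\psi_{1}(s)=as-sl(s)$ with $a=\psi_{1}(1)$. To eliminate $a$ I would compute the boundary function $f(x,y,0)$: by homogeneity $f(x,y,0)=(x+y)h\left(\frac{y}{x+y}\right)$, where $h(t)=f(1-t,t,0)$, and Theorem \ref{main} gives $h(t)=(1-t)l(1-t)+tl(t)$, so that $f(x,y,0)=xl(x)+yl(y)-(x+y)l(x+y)$. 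Substituting the interior and boundary representations back into \eqref{entropy.eq} collapses everything to $\psi_{1}(s)=-sl(s)$, i.e. $a=0$, and therefore
\[
f(x,y,z)=xl(x)+yl(y)+zl(z)-(x+y+z)l(x+y+z)
\]
on $\R^{3}_{++}$.

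Finally I would invoke the regularity hypothesis. The map $x\mapsto f(1-x,x,0)=h(x)=(1-x)l(1-x)+xl(x)$ is, by construction, a solution of the fundamental equation of information; under any of the listed conditions (continuity at a point, boundedness on an interval, integrability on compact subintervals, or measurability) the classical regularity theory of that equation (see \cite{Acz77}, \cite{ESS98}) forces $l$ to be a continuous logarithmic function, so $l(x)=\log(x)$ with an arbitrary base. Substituting $l=\log$ yields the asserted formula on $\R^{3}_{++}$, and the boundary points of $\mathcal{D}$ are recovered from the expression for $f(x,y,0)$ together with homogeneity and the convention $0\cdot\log(0)=0$; the converse ("and only then") is the routine verification that this $f$ is symmetric, positively homogeneous of degree $1$, solves \eqref{entropy.eq}, and has continuous $x\mapsto f(1-x,x,0)$. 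I expect the genuine obstacle to be this last regularity step: transferring a regularity property of the \emph{combined} function $h$ to the logarithmic function $l$ hidden inside it is exactly the delicate point, and it is there that one must appeal to the established regularity theory for the fundamental equation of information rather than to any elementary manipulation.
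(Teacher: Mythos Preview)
Your proposal is correct and follows essentially the same route as the paper: reduce to \eqref{gen.ent.eq} with $\mu(x)=x$ via symmetry and homogeneity, apply Theorem~\ref{main}, use homogeneity to determine $\psi_{1}$ up to the constant $\psi_{1}(1)$, and then kill that constant by feeding the representations back into the entropy equation. The only cosmetic difference is that the paper invokes the regularity theory (via \cite{AD75}) immediately after Theorem~\ref{main} to obtain $l=\log$ at once, whereas you carry a general logarithmic $l$ through the computation and upgrade it to $\log$ at the end; your appeal to the intermediate form $h(t)=(1-t)l(1-t)+tl(t)$ from the \emph{proof} of Theorem~\ref{main} is exactly what the paper is implicitly using when it asserts ``therefore $\psi_{1}(1)=0$''.
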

\begin{proof}
The restriction $xy+yz+zx>0$ in the definition of the set
$S$ excludes the case where two variables are $0$. If we define
\[
f\left(x, 0, 0\right)=f\left(0, x, 0\right)=f\left(0, 0, x\right),  \quad \left(x\in\R_{++}\right)
\]
then $f$ remains symmetric and all the assumptions of the theorem remain true for all
$\left(x, y, z\right)\in\mathcal{D}$.
Because of the symmetry of the function $f$ we obtain that
\[
f\left(x, y, z\right)=f\left(x, y+z, 0\right)+f\left(0, y, z\right)
\]
holds for all $x, y, z\in\R_{++}$, too.
Since $f$ is homogeneous of degree $1$ we obtain the following equation
\[
f\left(x, y, z\right)=f\left(x, y+z, 0\right)+
\left(y+z\right)f\left(0, \frac{y}{y+z}, \frac{z}{y+z}\right),
\]
where $x, y, z\in\R_{+}$ are such that $y+z>0$.
Applying Theorem \ref{main}. and the results of Section 3.2. in \cite{AD75}
we get that there exists a function $\psi_{1}:\R_{++}\rightarrow\R$
such that
\begin{equation}\label{acz.eq}
f\left(x, y, z\right)=
x\log\left(x\right)+y\log\left(y\right)+z\log\left(z\right)+
	\psi_{1}\left(x+y+z\right)
\end{equation}
holds for all $x, y, z\in\R_{++}$.
At this point of our proof we will show that
\[
\psi_{1}\left(x\right)=-x\log\left(x\right)
\]
holds for all $x\in\R_{++}$.
Using that $f$ is homogeneous of degree $1$ we obtain that
\begin{equation}\label{eq12}
\begin{array}{l}
\lambda x\log\left(x\right)+
\lambda y\log\left(y\right)+
\lambda z\log\left(z\right)+
\lambda \psi_{1}\left(x+y+z\right)=\\
\lambda x\log\left(x\right)+
\lambda y\log\left(y\right)+
\lambda z\log\left(z\right)+\\
\lambda \left(x+y+z\right)\log\left(\lambda\right)+
\psi_{1}\left(\lambda\left(x+y+z\right)\right)
\end{array}
\end{equation}
holds for all $\lambda, x, y, z\in\R_{++}$. Substitute into (\ref{eq12}) $x=y=z=\frac{1}{3}$,
then we get that
\[
\psi_{1}\left(\lambda\right)=-\lambda \log\left(\lambda\right) +\lambda \psi_{1}\left(1\right) .
\quad
\left(\lambda\in\R_{++}\right)
\]
This means that
\[
\begin{array}{l}
f\left(x, y, z\right)=x\log\left(x\right)+
y\log\left(y\right)+
z\log\left(z\right)-\\
\left(x+y+z\right)\log\left(x+y+z\right)-\left(x+y+z\right)\psi_{1}\left(1\right)
\end{array}
\]
for all $x, y, z\in\R_{++}$.
On the other hand $f$ satisfies the entropy equation, therefore $\psi_{1}\left(1\right)=0$.
This implies that
\[
f\left(x, y, z\right)=x\log\left(x\right)+
y\log\left(y\right)+
z\log\left(z\right)-
\left(x+y+z\right)\log\left(x+y+z\right)
\]
holds for all $\left(x, y, z\right)\in\mathcal{D}$.
\end{proof}

\begin{corollary}\cite{KM74}
If a function $f$ is continuous, symmetric and positively homogeneous of degree $1$ on the set
\[
S=\left\{\left(x, y, z\right)\in\R^{3}\vert x\geq 0, y\geq 0, z\geq 0, xy+yz+zx>0\right\}
\]
and satisfies the functional equation
\[
f\left(x, y, z\right)=f\left(x+y, 0, z\right)+f\left(x, y, 0\right)
\]
in the interior $S^{\circ}$ of $S$, then
\[
f\left(x, y, z\right)=
x\log\left(x\right)+y\log\left(y\right)+z\log\left(z\right)-
\left(x+y+z\right)\log\left(x+y+z\right),
\]
where the basis of the logarithm is arbitrary.
\end{corollary}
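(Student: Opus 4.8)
The plan is to deduce this corollary directly from Theorem \ref{aczel}, since the two statements are nearly identical and the present hypotheses are in fact stronger. First I would reconcile the two functional equations: by the symmetry of $f$ we have $f(x+y, 0, z) = f(x+y, z, 0)$, so the equation $f(x,y,z) = f(x+y, 0, z) + f(x,y,0)$ assumed here is literally the entropy equation $f(x,y,z) = f(x+y, z, 0) + f(x,y,0)$ treated in Theorem \ref{aczel}. Next I would identify the domain of validity: since any point of $S$ at which some coordinate vanishes fails to be interior (a perturbation rendering that coordinate negative leaves $S$), one has $S^{\circ} = \R^{3}_{++}$, so the equation is available for all $x, y, z \in \R_{++}$.

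Second, I would verify the regularity hypothesis. Theorem \ref{aczel} admits, among other possibilities, the case in which the map $x \longmapsto f(1-x, x, 0)$ is continuous at a point. As $f$ is assumed continuous on all of $S$, this one-variable restriction is continuous on $]0,1[$, hence continuous at every point, so the regularity requirement of Theorem \ref{aczel} is automatically fulfilled. Together with the assumed symmetry and positive homogeneity of degree $1$, all hypotheses of Theorem \ref{aczel} are then in force.

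The one point that needs care --- and which I regard as the main, though still routine, obstacle --- is that Theorem \ref{aczel} presupposes the functional equation on the set $S$, whereas here it is granted only on the interior $S^{\circ} = \R^{3}_{++}$. I would bridge this exactly as in the proof of Theorem \ref{aczel}: define $f(x, 0, 0) = f(0, x, 0) = f(0, 0, x)$ for $x \in \R_{++}$, so that $f$ is specified at the points of $S$ having one vanishing coordinate, observing that this prescription preserves symmetry, positive homogeneity, and continuity. A limiting argument, exploiting the continuity of $f$ to pass from interior points of $S$ to its boundary points, then propagates the functional equation from $S^{\circ}$ to $S$.

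With these identifications in place, Theorem \ref{aczel} applies verbatim and yields
\[
f\left(x, y, z\right)=x\log\left(x\right)+y\log\left(y\right)+z\log\left(z\right)-\left(x+y+z\right)\log\left(x+y+z\right),
\]
with arbitrary basis for the logarithm, which is precisely the asserted conclusion.
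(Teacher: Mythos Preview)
Your approach is essentially identical to the paper's, which simply observes that the corollary is a consequence of Theorem \ref{aczel} and applies it directly; you have merely supplied more of the routine details than the paper does. One small slip: the points $(x,0,0)$, $(0,x,0)$, $(0,0,x)$ that you adjoin have \emph{two} vanishing coordinates and lie in $\mathcal{D}\setminus S$ (not ``points of $S$ having one vanishing coordinate''), but this does not affect the argument.
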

\begin{proof}
As it can be read in \cite{Acz77}, this statement is a consequence of the previous theorem.
Thus Theorem \ref{aczel}. can be applied directly to get the continuous solutions of the equation in question.
\end{proof}


{\bf Acknowledgments:}
This result has been supported by the Hungarian Scientific Research Fund (OTKA)
Grant NK 68048. The author is beholden to Gyula Maksa for his helpful comments.


\begin{thebibliography}{MMMMM99}


\bibitem[AGMS08]{AGMS08}
A.~Abbas, E.~Gselmann, Gy.~Maksa, Z.~Sun,
\emph{General and continuous solutions of the entropy equation}, Submitted.

\bibitem[Acz77]{Acz77}
J.~Acz\'{e}l,
\emph{Results on the entropy equation.},
Bull. Acad. Polon. Sci. S\'{e}r. Sci. Math. Astronom. Phys. \textbf{25} (1977), no. 1, 13–-17.

\bibitem[AD75]{AD75}
J.~Acz\'{e}l, Z.~Dar\'{o}czy,
\emph{On measures of information and their characterizations},  Mathematics in Science and Engineering, Vol. 115. Academic Press, New York--London, 1975.

\bibitem[ESS98]{ESS98}
B. R.~Ebanks, P.~Sahoo, W.~Sander, \emph{Characterization of information measures},
World Scientific Publishing Co. Inc., River Edge, NJ, 1998.

\bibitem[JKT69]{JKT69}
B.~Jessen, J.~Karpf, A.~Thorup,
\textit{Some functional equations in groups and rings},
Math. Scand. \textbf(22) (1969) 257-–265.


\bibitem[KM74]{KM74}
A.~Kami\'{n}ski, J.~Mikusi\'{n}ski,
\emph{On the entropy equation},
Bull. Acad. Polon. Sci. S\'{e}r. Sci. Math. Astronom. Phys., \textbf{22} (1974), 319--323.

\bibitem[Kuc85]{Kuc85}
M.~Kuczma,
\emph{An Introduction to the Theory of Functional Equations and Inequalities.
Cauchy's equation and Jensen's inequality},
Uniwersytet \'{S}laski, Katowice; Pa\'{n}stwowe Wydawnictwo Naukowe (PWN), Warsaw, 1985.

\bibitem[Mak00]{Mak00}
Gy.~Maksa,
\emph{The generalized associativity equation revisited},
Rocznik Nauk.-Dydakt. Prace Mat., \textbf{17} (2000) 175–-180.

\bibitem[RB87]{RB87}
F.~Rad\'{o}, J. A.~Baker,
\emph{Pexider's equation and aggregation of allocations},
Aequationes Math. \textbf{32} (1987), no. 2--3, 227-–239.



\end{thebibliography}
\end{document}